\def\C{{\mathbb C}}
\def\N{{\mathbb N}}
\def\R{{\mathbb R}}
\def\K{{\mathbb K}}
\def\H{{\cal H}}
\def\kk{{\cal K}}
\def\epsilon{\varepsilon}
\def\kappa{\varkappa}
\def\phi{\varphi}
\def\leq{\leqslant}
\def\geq{\geqslant}
\def\ssub#1#2{#1_{{}_{{\scriptstyle #2}}}}
\def\supp{{\tt supp}}
\def\cla{{\cal W}}
\def\slim{\mathop{\hbox{$\overline{\hbox{\rm lim}}$}}\limits}
\def\aco{{\tt aconv}}
\def\acon{\overline{\tt aconv}}
\newtheorem{theorem}{Theorem}[section]
\newtheorem{lemma}[theorem]{Lemma}
\newtheorem{corollary}[theorem]{Corollary}
\newtheorem{proposition}[theorem]{Proposition}
\newtheorem{question}[theorem]{Question}
\newtheorem{example}[theorem]{Example}
\numberwithin{equation}{section}
\begin{document}

\title{Norm attaining operators and pseudospectrum}

\author{S. Shkarin}

\date{}

\maketitle

\begin{abstract} \noindent It is shown that if $1<p<\infty$ and
$X$ is a subspace or a quotient of an $\ell_p$-direct sum of finite
dimensional Banach spaces, then for any compact operator $T$ on $X$
such that $\|I+T\|>1$, the operator $I+T$ attains its norm. A
reflexive Banach space $X$ and a bounded rank one operator $T$ on
$X$ are constructed such that $\|I+T\|>1$ and $I+T$ does not attain
its norm.
\end{abstract}

\small \noindent{\bf MSC:} \ \ 

\noindent{\bf Keywords:} \ \ Norm of the resolvent, pseudospectrum,
norm attaining operators \normalsize

\section{Introduction \label{s1}}\rm

All vector spaces in this paper are assumed to be over the field
$\K$, being either the field $\C$ of complex numbers or the field
$\R$ of real numbers. As usual, $\N$ is the set of positive integers
and $\R_+$ is the set of non-negative real numbers. The Banach space
of all bounded linear operators from a Banach space $X$ to a Banach
space $Y$ is denoted by $L(X,Y)$ and $\kk(X,Y)$ stands for the space
of compact linear operators $T:X\to Y$. We write $L(X)$ instead of
$L(X,X)$, $\kk(X)$ instead of $\kk(X,X)$ and $X^*$ instead of
$L(X,\K)$.

We say that $T\in L(X,Y)$ {\it attains its norm on} $x\in X$ if
$\|x\|=1$ and $\|Tx\|=\|T\|$. It is said that $T$ {\it attains its
norm} if there is an $x\in X$ with $\|x\|=1$ such that $T$ attains
its norm on $x$. We would like to mention a few classical results on
the norm attaining property. The James theorem \cite{diest} says
that a Banach space $X$ is reflexive if and only if any $f\in X^*$
attains its norm. As a corollary of the James theorem, we have that
$X$ is reflexive if and only if any $T\in \kk(X)$ attains its norm.
Indeed, if $X$ is non-reflexive, then the James theorem provides a
bounded rank one operator which does not attain its norm. On the
other hand, if $X$ is reflexive and $T\in \kk(X)$, then the function
$x\mapsto \|Tx\|$ is weakly sequentially continuous and the closed
unit ball of $X$ is weakly sequentially compact and therefore the
above function attains its maximum on the unit ball. Below it is
shown that the situation with attaining of the norm for operators
$I+T$ with compact $T$ is quite different. For further results on
the norm attaining operators we refer to
\cite{norm1,norm2,norm3,norm4,norm5,norm6,norm7,norm8,norm9} and
references therein.

The norm attaining property of operators is related to the concept
of the pseudospectrum. Let $X$ be a complex Banach space. For
$\epsilon>0$ and $T\in L(X)$, the $\epsilon$-{\it pseudospectrum} of
$T$ is usually defined as
\begin{equation}\label{ps1}
\sigma_{\epsilon}(T)=\{\lambda\in\C:\|(T-\lambda
I)^{-1}\|>\epsilon^{-1}\}
\end{equation}
or as
\begin{equation}\label{ps2}
\Sigma_{\epsilon}(T)=\{\lambda\in\C:\|(T-\lambda
I)^{-1}\|\geq\epsilon^{-1}\},
\end{equation}
where $\|(T-\lambda I)^{-1}\|$ is assumed to be infinite if
$\lambda$ belongs to the spectrum $\sigma(T)$ of $T$, see, for
instance, \cite{Boet,BGS,CCH,Har,Tref1,Tref2,Sha1,Sha2}. Recently
Shargorodsky \cite{Sha1} demonstrated that the level set
\begin{equation}\label{level}
\Sigma_{\epsilon}(T)\setminus\sigma_{\epsilon}(T)=\{\lambda\in\C:\|(T-\lambda
I)^{-1}\|=\epsilon^{-1}\}
\end{equation}
can have non-empty interior in general, while its interior is empty
when the space $X$ or the dual space $X^*$ is complex uniformly
convex. It is well-known that
\begin{equation}\label{ps3}
\sigma_{\epsilon}(T)=\bigcup_{\|A\|<\epsilon} \sigma(T+A),
\end{equation}
see, for instance, \cite{CCH,GHP}. This equality is one of the main
reasons why many authors prefer (\ref{ps1}) rather than (\ref{ps2})
as the definition of pseudospectrum. We study the question whether
the similar equality holds in the case of non-strict inequalities:
\begin{equation}\label{ps4}
\Sigma_{\epsilon}(T)=\Sigma^0_\epsilon(T),\quad\text{where}\quad
\Sigma^0_\epsilon(T)=\bigcup_{\|A\|\leq\epsilon} \sigma(T+A).
\end{equation}
It is worth noting that the inclusion
\begin{equation}\label{ps5}
\Sigma_\epsilon^0(T)\subseteq \Sigma_{\epsilon}(T)
\end{equation}
holds for any bounded operator $T$ on any Banach space
\cite{CCH,Har}. It is proved by Finck and Ehrhardt, see
\cite{cstar}, that the equality (\ref{ps4}) holds if $X$ is a
Hilbert space. Shargorodsky \cite{Sha2} constructed a bounded linear
operator $T$ on the reflexive space $X=\ell_p\times\ell_q$ with
$1<p<q<\infty$ and the norm $\|(x,y)\|=\|x\|_p+\|y\|_q$ for which
(\ref{ps4}) fails. He also constructed $T\in\kk(\ell_1)$ for which
(\ref{ps4}) fails. These examples naturally lead to the following
question, raised in \cite{Sha2}.

\begin{question}\label{q1} Is it true that $(\ref{ps4})$ holds for any
compact operator on a reflexive complex Banach space?
\end{question}

We show that, in general, the answer to Question~\ref{q1} is
negative and demonstrate that if $1<p<\infty$ and $X$ is an
$\ell_p$-direct sum of finite dimensional Banach spaces, then
(\ref{ps4}) holds for each bounded operator $T$ on $X$. In
particular, it holds when $X=\ell_p$ with $1<p<\infty$. It turns out
that the validity of (\ref{ps4}) for any compact operator $T$ on a
Banach space $X$ is closely related to the norm-attaining property.

\begin{proposition}\label{ll0} Let $X$ be a complex Banach space,
$T\in \kk(X)$, $\epsilon>0$ and $z\in \Sigma_\epsilon(T)$. Then the
following conditions are equivalent:
\begin{itemize}
\item[\rm (\ref{ll0}.1)] $z\in\Sigma^0_\epsilon(T);$
\item[\rm (\ref{ll0}.2)] if
$\|(T-zI)^{-1}\|=\epsilon^{-1}>|z|^{-1}$, then $(T-zI)^{-1}$ attains
its norm.
\end{itemize}
\end{proposition}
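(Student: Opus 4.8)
The plan is to fix $S=(T-zI)^{-1}$ (whenever $z\notin\sigma(T)$) and exploit the factorisation $T+A-zI=(T-zI)(I+SA)$, valid for every $A\in L(X)$, which shows that $z\in\sigma(T+A)$ precisely when $I+SA$ is not invertible. Hence
\[
z\in\Sigma^0_\epsilon(T)\iff I+SA\ \text{is non-invertible for some}\ A\ \text{with}\ \|A\|\le\epsilon .
\]
Two easy observations dispose of the degenerate situations. If $z\in\sigma(T)$, then $z\in\sigma(T+0)\subseteq\Sigma^0_\epsilon(T)$, so (\ref{ll0}.1) holds, while $\|(T-zI)^{-1}\|=\infty\neq\epsilon^{-1}$ makes the premise of (\ref{ll0}.2) false and (\ref{ll0}.2) vacuously true. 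If $z\notin\sigma(T)$ but $\|S\|>\epsilon^{-1}$, then $z\in\sigma_\epsilon(T)$, which by (\ref{ps3}) equals $\bigcup_{\|A\|<\epsilon}\sigma(T+A)\subseteq\Sigma^0_\epsilon(T)$; again (\ref{ll0}.1) holds and (\ref{ll0}.2) is vacuous. Thus I may assume $z\notin\sigma(T)$ and $\|S\|=\epsilon^{-1}$, and the whole difficulty becomes comparing $z\in\Sigma^0_\epsilon(T)$ with norm attainment of $S$. Throughout I will use that, since $T$ is compact, $S=-z^{-1}I+K$ with $K=z^{-1}TS$ compact.

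The easy implication is that norm attainment of $S$ forces (\ref{ll0}.1). If $\|Su\|=\|S\|=\epsilon^{-1}$ with $\|u\|=1$, put $v=Su$ and choose, by Hahn--Banach, $g\in X^*$ with $\|g\|=1$ and $g(v)=\|v\|=\epsilon^{-1}$; then the operator $A$ given by $Ax=-\epsilon\,g(x)\,u$ satisfies $\|A\|=\epsilon$ and $(I+SA)v=v-\epsilon\epsilon^{-1}Su=0$, so $I+SA$ is non-invertible and $z\in\Sigma^0_\epsilon(T)$. This needs no restriction on $|z|$. It remains to analyse the premise of (\ref{ll0}.2). When $\epsilon^{-1}\le|z|^{-1}$, i.e. $|z|\le\epsilon$, (\ref{ll0}.2) is vacuously true, so I only check that (\ref{ll0}.1) also holds: taking $A=zI$ (permissible, as $\|A\|=|z|\le\epsilon$) gives $I+SA=I+zS=(T-zI)^{-1}\bigl((T-zI)+zI\bigr)=(T-zI)^{-1}T$, which is non-invertible because a compact operator on an infinite-dimensional space is not invertible. (If $\dim X<\infty$ the unit ball is compact, $S$ attains its norm automatically, and the equivalence follows from the rank-one construction alone; so I assume $\dim X=\infty$, whence $0\in\sigma(T)$ and the case $z=0$ is already covered above.)

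There remains the range $\epsilon^{-1}>|z|^{-1}$, where (\ref{ll0}.2) asserts exactly that $S$ attains its norm; one implication was just proved, and I must prove the converse. Suppose $z\in\Sigma^0_\epsilon(T)$, so $I+SA$ is non-invertible for some $\|A\|\le\epsilon$. The decisive point is the strict inequality $\|z^{-1}A\|\le|z|^{-1}\epsilon<\epsilon^{-1}\epsilon=1$, which is exactly what $\epsilon^{-1}>|z|^{-1}$ provides and which makes $I-z^{-1}A$ invertible. Writing $I+SA=(I-z^{-1}A)+KA=(I-z^{-1}A)\bigl(I+(I-z^{-1}A)^{-1}KA\bigr)$ with $(I-z^{-1}A)^{-1}KA$ compact, the Fredholm alternative upgrades the non-invertibility of $I+SA$ to the existence of $w\neq0$ with $(I+SA)w=0$, i.e. $SAw=-w$. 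Setting $y=Aw$ (so $y\neq0$, since $Sy=-w\neq0$) and chaining the estimates
\[
\|w\|=\|Sy\|\le\|S\|\,\|y\|=\epsilon^{-1}\|Aw\|\le\epsilon^{-1}\|A\|\,\|w\|\le\|w\|
\]
forces every inequality to be an equality; in particular $\|Sy\|=\|S\|\,\|y\|$ with $y\neq0$, so $S$ attains its norm at $y/\|y\|$, establishing (\ref{ll0}.2).

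Assembling the cases yields the equivalence. The genuinely substantial step is this last converse: converting the mere non-invertibility of a single perturbation $I+SA$ into an honest norm-attaining vector for $S$. The obstacle is that $SA$ need not be compact, so the Fredholm alternative is not directly available; the remedy --- and the sole place where the hypothesis $\epsilon^{-1}>|z|^{-1}$ is used --- is to peel off the invertible factor $I-z^{-1}A$ afforded by the decomposition $S=-z^{-1}I+K$ together with the strict bound $\|z^{-1}A\|<1$, after which the compactness of $KA$ reinstates the Fredholm alternative and the equality case of submultiplicativity pins down the extremal vector. I would take care over the boundary value $|z|=\epsilon$ (absorbed into the vacuous case) and over the exact point where infinite-dimensionality enters the step $A=zI$.
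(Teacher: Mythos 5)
Your proposal is correct and takes essentially the same route as the paper: your rank-one construction is the paper's Lemma~\ref{att1} transported through the factorization $T+A-zI=(T-zI)(I+SA)$, and your converse coincides with the paper's argument, since $\ker(I+SA)=\ker(T-zI+A)$ and your splitting $I+SA=(I-z^{-1}A)\bigl(I+(I-z^{-1}A)^{-1}KA\bigr)$ (invertible factor from $\|A\|\leq\epsilon<|z|$, Fredholm alternative for the compact remainder) merely repackages the paper's observation that $T-zI+A$ is an index-zero Fredholm operator, being the compact $T$ plus the invertible $-zI+A$, after which you run the identical equality chain to produce the norming vector. The degenerate cases ($z\in\sigma(T)$, the strict inequality case via (\ref{ps3}), and the boundary case via $A=zI$ together with $0\in\sigma(T)$ in infinite dimensions) are likewise handled exactly as in the paper, so there is no gap.
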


We use the above proposition in order to prove the following result.

\begin{proposition}\label{ll} Let $X$ be a complex Banach space.
Then the following conditions are equivalent:
\begin{itemize}
\item[\rm (\ref{ll}.1)] the equality $(\ref{ps4})$ holds for any $\epsilon>0$ and
any $T\in\kk(X);$
\item[\rm (\ref{ll}.2)] for any $T\in\kk(X)$ such
that $I+T$ is invertible and $\|I+T\|>1$, the operator $I+T$ attains
its norm.
\end{itemize}
\end{proposition}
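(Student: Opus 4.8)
The plan is to use Proposition~\ref{ll0} to recast the equality (\ref{ps4}) as a family of norm-attainment assertions, and then to pass back and forth between resolvents of compact operators and invertible operators of the form $I+K$ with $K$ compact by means of an elementary algebraic identity. Concretely, fix a compact $T$ and a scalar $z\ne0$ with $z\notin\sigma(T)$. Since $(I-A)^{-1}=I+A(I-A)^{-1}$ for any invertible $I-A$, applying this with $A=z^{-1}T$ gives
\[
(T-zI)^{-1}=-z^{-1}(I-z^{-1}T)^{-1}=-z^{-1}(I+K),\qquad K:=z^{-1}T(I-z^{-1}T)^{-1},
\]
where $K$ is compact and $I+K=-z(T-zI)^{-1}$ is invertible. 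Two consequences are immediate: first, $(T-zI)^{-1}$ attains its norm if and only if $I+K$ does, since they differ by the nonzero scalar $-z^{-1}$; second, $\|I+K\|=|z|\,\|(T-zI)^{-1}\|$, so the threshold condition $\|(T-zI)^{-1}\|=\epsilon^{-1}>|z|^{-1}$ occurring in (\ref{ll0}.2) is exactly $\|I+K\|=|z|/\epsilon>1$.

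To prove (\ref{ll}.2)$\Rightarrow$(\ref{ll}.1), I would assume (\ref{ll}.2) and fix $\epsilon>0$ and $T\in\kk(X)$. By (\ref{ps5}) it suffices to show $\Sigma_\epsilon(T)\subseteq\Sigma^0_\epsilon(T)$, so take $z\in\Sigma_\epsilon(T)$; by Proposition~\ref{ll0} I only need to verify (\ref{ll0}.2). If the hypothesis $\|(T-zI)^{-1}\|=\epsilon^{-1}>|z|^{-1}$ fails, then (\ref{ll0}.2) holds vacuously. Otherwise this forces $z\ne0$ and $z\notin\sigma(T)$, and the identity above produces a compact $K$ with $I+K$ invertible and $\|I+K\|=|z|/\epsilon>1$. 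Then (\ref{ll}.2), applied to $K$, shows that $I+K$, and hence $(T-zI)^{-1}$, attains its norm. This verifies (\ref{ll0}.2), so $z\in\Sigma^0_\epsilon(T)$.

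For the converse (\ref{ll}.1)$\Rightarrow$(\ref{ll}.2), I would assume (\ref{ll}.1) and let $T_0\in\kk(X)$ with $I+T_0$ invertible and $\|I+T_0\|>1$; the goal is to realize $I+T_0$ as a scalar multiple of a resolvent of a compact operator. Put $T':=T_0(I+T_0)^{-1}$. Using $(I+T_0)^{-1}=I-T_0(I+T_0)^{-1}$ one sees that $T'$ is compact and $T'-I=-(I+T_0)^{-1}$, so $1\notin\sigma(T')$ and $(T'-I)^{-1}=-(I+T_0)$. Setting $\epsilon:=\|I+T_0\|^{-1}$, one gets $\|(T'-I)^{-1}\|=\|I+T_0\|=\epsilon^{-1}$, whence $1\in\Sigma_\epsilon(T')$, and moreover $\epsilon^{-1}=\|I+T_0\|>1=|1|^{-1}$. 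Now (\ref{ll}.1) gives $\Sigma_\epsilon(T')=\Sigma^0_\epsilon(T')$, so $1\in\Sigma^0_\epsilon(T')$, i.e.\ (\ref{ll0}.1) holds for the data $(T',\epsilon,1)$. By Proposition~\ref{ll0} this yields (\ref{ll0}.2), whose hypothesis I have just checked, so $(T'-I)^{-1}=-(I+T_0)$ attains its norm; equivalently $I+T_0$ attains its norm.

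The argument is essentially algebraic once Proposition~\ref{ll0} is available, so I do not anticipate a genuine obstacle, only bookkeeping. The points requiring care are that the scalar $z$ (taken to be $1$ in the second direction) is nonzero and lies outside the spectrum, so that the resolvent genuinely exists, and that the threshold inequality in (\ref{ll0}.2) translates into the \emph{strict} inequality $\|I+K\|>1$ rather than $\|I+K\|\geq1$; this strictness is precisely what matches the hypothesis of (\ref{ll}.2) and explains why both formulations use strict inequalities. Compactness of $K$ (respectively of $T'$) is automatic, since $T$ (respectively $T_0$) is compact and the remaining factors are bounded.
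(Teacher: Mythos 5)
Your proposal is correct and takes essentially the same route as the paper: both directions hinge on Proposition~\ref{ll0} together with the algebraic passage between $(T-zI)^{-1}$ and $\pm z^{-1}(I+K)$ with $K$ compact, and your $K$ and $T'$ coincide (up to sign, since you take $z=1$ where the paper takes $z=-1$) with the paper's operators $S=-z(T-zI)^{-1}-I$ and $(I+T_0)^{-1}-I$. The only cosmetic difference is that you argue directly by cases (treating the failure of the hypothesis of (\ref{ll0}.2) as vacuous) where the paper argues by contradiction.
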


The above proposition motivates the introduction of the following
class of Banach spaces.

\medskip

{\bf Definition 1.} \ We say that a Banach space $X$ belongs to the
class $\cla$ if for each $T\in\kk(X)$ such that $\|I+T\|>1$, $I+T$
attains its norm.

\medskip

From Proposition~\ref{ll} it follows that for any $X\in\cla$ and any
compact operator $T$ on $X$, the equality (\ref{ps4}) holds. It is
also worth noting that the restriction $\|I+T\|>1$ is natural.
Indeed, the diagonal operator $D$ on $\ell_2$ with the diagonal
entries $\{1-2^{-n}\}_{n\in\N}$ has norm 1 which is not attained and
$D$ is the sum of the identity operator and a compact operator. The
following proposition provides a sufficient condition for a Banach
space to belong to $\cla$.

\bigskip

{\bf Definition 2.} \ Let $1<p<\infty$. We say that a Banach space
$X$ is a $p$-space if $X$ is reflexive and for any $x\in X$ and any
weakly convergent to zero sequence $\{u_n\}_{n\in \N}$ in $X$,
\begin{equation} \label{lpclass}
\lim_{n\to\infty}\bigl(\|x+u_n\|-(\|x\|^p+\|u_n\|^p)^{1/p}\bigr)=0.
\end{equation}

\medskip

It is easy to see that any Hilbert space is a $2$-space and that any
finite dimensional Banach space is a $p$-space for any $p$. Note
that an infinite dimensional Banach space cannot be a $p$-space and
a $q$-space for $p\neq q$. Recall that  if $1\leq p<\infty$ and
$\{X_\alpha\}_{\alpha\in\Lambda}$ is a family of Banach spaces, then
their $\ell_p$-direct sum is the space
$$
X=\biggl\{x\in \prod_{\alpha\in\Lambda}
X_\alpha:\sum_{\alpha\in\Lambda} \|x_\alpha\|^p<\infty\biggr\}
$$
endowed with the norm
$$
\|x\|=\biggl(\sum_{\alpha\in\Lambda} \|x_\alpha\|^p\biggr)^{1/p}.
$$
When the family consists of just 2 spaces $X$ and $Y$ we denote its
$\ell_p$-direct sum by $X\oplus_p Y$. We also denote $X\times Y$
with the norm $\|(x,y)\|=\max\{\|x\|,\|y\|\}$ by the symbol
$X\oplus_\infty Y$.

\begin{proposition}\label{uvuv}
Let $1<p<\infty$. Then any closed linear subspace of an
$\ell_p$-direct sum of any family of finite dimensional Banach
spaces is a $p$-space.
\end{proposition}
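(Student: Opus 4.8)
The plan is to reduce the statement to the ambient direct sum and then verify the defining property there. Write $Z=\bigl(\bigoplus_{\alpha\in\Lambda}X_\alpha\bigr)_{\ell_p}$ for the $\ell_p$-direct sum and let $X\subseteq Z$ be a closed subspace. Since $1<p<\infty$ and each $X_\alpha$ is finite dimensional, hence reflexive, $Z$ is reflexive: its dual is naturally $\bigl(\bigoplus_{\alpha} X_\alpha^*\bigr)_{\ell_q}$ with $1/p+1/q=1$, and iterating gives $Z^{**}=Z$. As a closed subspace of a reflexive space, $X$ is reflexive. For the norm condition (\ref{lpclass}) I would observe that it transfers verbatim from $Z$ to $X$: the norm on $X$ is the restriction of the norm on $Z$, and by Hahn--Banach a sequence $\{u_n\}$ in $X$ is weakly null in $X$ if and only if it is weakly null in $Z$. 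Thus it suffices to prove that $Z$ itself satisfies (\ref{lpclass}).

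Fix $x\in Z$ and a weakly null sequence $\{u_n\}$ in $Z$. The two facts driving the argument are: (a) since $x\in Z$, for every $\epsilon>0$ there is a finite set $F\subseteq\Lambda$ with $\sum_{\alpha\notin F}\|x_\alpha\|^p<\epsilon^p$; and (b) for each fixed $\alpha$ the coordinate projection $Z\to X_\alpha$ is bounded, so $(u_n)_\alpha\to 0$ weakly in $X_\alpha$, and since $X_\alpha$ is finite dimensional this forces $\|(u_n)_\alpha\|\to 0$. Consequently, over the finite block $F$ the perturbation $u_n$ becomes negligible while $x$ is essentially concentrated on $F$; this asymptotic separation of the ``supports'' of $x$ and of $u_n$ is precisely what makes the norm behave additively in the $\ell_p$ sense.

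To make this precise I would split $\|x+u_n\|^p=A_n+B_n$, where $A_n=\sum_{\alpha\in F}\|x_\alpha+(u_n)_\alpha\|^p$ and $B_n=\sum_{\alpha\notin F}\|x_\alpha+(u_n)_\alpha\|^p$. By (b), $A_n^{1/p}\to\bigl(\sum_{\alpha\in F}\|x_\alpha\|^p\bigr)^{1/p}$, which by (a) differs from $\|x\|$ by at most $\epsilon$. Writing $x''$ and $u_n''$ for the restrictions of $x$ and $u_n$ to $\Lambda\setminus F$, the reverse triangle inequality applied to the $\ell_p$-norm on the $F^c$-coordinates yields $\bigl|B_n^{1/p}-\|u_n''\|\bigr|\leq\|x''\|<\epsilon$ for all $n$. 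Since $\|u_n''\|$ differs from $\|u_n\|$ by a quantity tending to $0$ (again by (b)), and since both $\|x+u_n\|=(A_n+B_n)^{1/p}$ and $(\|x\|^p+\|u_n\|^p)^{1/p}$ are values of the $\R^2$ norm $\Phi(a,b)=(a^p+b^p)^{1/p}$, for which $|\Phi(a,b)-\Phi(a',b')|\leq|a-a'|+|b-b'|$, I would conclude that $\limsup_n\bigl|\|x+u_n\|-(\|x\|^p+\|u_n\|^p)^{1/p}\bigr|\leq 2\epsilon$. As $\epsilon>0$ is arbitrary, the limit in (\ref{lpclass}) is $0$.

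The only genuinely non-routine point is step (b) combined with the finite-support truncation in (a): the crux is recognizing that finite dimensionality of the summands upgrades coordinatewise weak convergence to norm convergence, so that on any fixed finite block the perturbation disappears in the limit. Once this blockwise behaviour is established, everything reduces to the elementary Lipschitz estimate for the two-dimensional $\ell_p$-norm, and the passage from $Z$ to an arbitrary closed subspace is immediate from Hahn--Banach.
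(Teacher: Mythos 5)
Your proof is correct and follows essentially the same route as the paper: the paper likewise reduces to the ambient $\ell_p$-sum (its Lemma~3.3 plus the remark that closed subspaces of $p$-spaces are $p$-spaces, with weak nullness transferring by Hahn--Banach), approximates $x$ by its truncation to a finite block $B$, and uses that the finite-rank projection $P_B$ sends the weakly null sequence to a norm-null one. The only cosmetic difference is bookkeeping: the paper replaces $u_n$ by $u_n-P_Bu_n$ to get exactly disjoint supports and hence exact $\ell_p$-additivity before perturbing, whereas you split $\|x+u_n\|^p$ into the two blocks directly and absorb the cross terms via the reverse triangle inequality and the Lipschitz estimate for $(a,b)\mapsto(a^p+b^p)^{1/p}$.
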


The following two theorems provide, in particular, a partial
affirmative answer to Question~\ref{q1}. The next one extends the
validity of (\ref{ps4}) for any bounded operator $T$ from just
Hilbert spaces to a wider class of Banach spaces.

\begin{theorem}\label{lp1}
Let $1<p<\infty$ and $X$ be an $\ell_p$-direct sum of a family of
complex finite dimensional Banach spaces. Then $(\ref{ps4})$ holds
for any $T\in L(X)$.
\end{theorem}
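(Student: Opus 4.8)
The plan is to establish the reverse of the automatic inclusion (\ref{ps5}), i.e. $\Sigma_\epsilon(T)\subseteq\Sigma^0_\epsilon(T)$. Fix $z\in\Sigma_\epsilon(T)$. If $z\in\sigma(T)$ we take $A=0$, so assume $S:=T-zI$ is invertible; then $\|(T-zI)^{-1}\|\geq\epsilon^{-1}$ says exactly that $m:=\|S^{-1}\|^{-1}=\inf_{\|x\|=1}\|Sx\|\leq\epsilon$. It therefore suffices to produce $A\in L(X)$ with $\|A\|\leq\epsilon$ for which $S+A$ is not invertible, since then $z\in\sigma(T+A)\subseteq\Sigma^0_\epsilon(T)$. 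I would get such an $A$ by making $S+A$ fail to be bounded below.

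By Proposition~\ref{uvuv}, $X$ is a reflexive $p$-space, so its closed unit ball is weakly sequentially compact. Choose $x_n$ with $\|x_n\|=1$ and $\|Sx_n\|\to m$ and, after passing to a subsequence along which $x_n\rightharpoonup x$ and $\|x_n-x\|$, $\|S(x_n-x)\|$ converge, put $u_n=x_n-x\rightharpoonup 0$; note $Su_n=Sx_n-Sx\rightharpoonup 0$ since every bounded operator is weak--weak continuous. Applying (\ref{lpclass}) to both $x_n=x+u_n$ and $Sx_n=Sx+Su_n$ yields $\|x\|^p+\lim\|u_n\|^p=1$ and $m^p=\|Sx\|^p+\lim\|Su_n\|^p$. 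Combining these with the lower bounds $\|Sx\|\geq m\|x\|$ and $\|Su_n\|\geq m\|u_n\|$ forces equality throughout, so $\|Sx\|=m\|x\|$. Hence if $x\neq0$ then $S$ attains its lower bound at $x/\|x\|$, and the rank one perturbation $Aw=-\phi(w)Sx/\|x\|$, with $\phi$ a norming functional of $x/\|x\|$, satisfies $\|A\|=m\leq\epsilon$ and $(S+A)x=0$. This disposes of the case $x\neq0$.

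The remaining, and main, case is $x_n\rightharpoonup 0$, where $S$ need not attain its lower bound (as for a diagonal operator on $\ell_p$ whose entries strictly decrease to their infimum) and the difficulty is to get a perturbation of norm \emph{exactly} at most $\epsilon$. Since $X$ is an $\ell_p$-sum of finite dimensional spaces, weak convergence to $0$ makes the coordinatewise norms vanish, so a gliding hump argument lets me pass to a subsequence and find disjointly supported $\tilde x_n$ (with $\supp\tilde x_n=F_n$, the $F_n$ pairwise disjoint, $\|\tilde x_n\|=1$) and disjointly supported $\tilde y_n$ (supports $G_n$ pairwise disjoint) with $\|x_n-\tilde x_n\|\to 0$, $\|Sx_n-\tilde y_n\|\to 0$ and $\|\tilde y_n\|\to m$. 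If $m<\epsilon$, the undamped operator $Aw=-\sum_{n\geq N}\psi_n(w)\tilde y_n$ on a far enough tail (with $\psi_n$ supported on $F_n$, $\|\psi_n\|=1$, $\psi_n(\tilde x_n)=1$) already has $\|A\|\leq\sup_{n\geq N}\|\tilde y_n\|<\epsilon$ and makes $S+A$ not bounded below, so I focus on the boundary case $m=\epsilon$ and set
\[
Aw=-\sum_n \frac{\epsilon}{\|\tilde y_n\|}\,\psi_n(w)\,\tilde y_n .
\]
Since the $F_n$ are pairwise disjoint and the $\tilde y_n$ live on the pairwise disjoint $G_n$, the $\ell_p$ geometry gives $\|Aw\|^p=\sum_n \epsilon^p|\psi_n(w)|^p\leq\epsilon^p\sum_n\|P_{F_n}w\|^p\leq\epsilon^p\|w\|^p$, so $\|A\|\leq\epsilon$ exactly; and $(S+A)x_n=Sx_n-\frac{\epsilon}{\|\tilde y_n\|}\psi_n(x_n)\tilde y_n\to0$ because $\psi_n(x_n)\to1$, $\epsilon/\|\tilde y_n\|\to1$ and $\tilde y_n-Sx_n\to0$. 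Thus $S+A$ is not bounded below, hence not invertible, giving $z\in\Sigma^0_\epsilon(T)$.

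The step I expect to be the crux is exactly this norm control in the boundary case $m=\epsilon$: an almost disjoint support argument only yields $\|A\|\leq\epsilon+o(1)$, and it is the combination of the \emph{exact} $\ell_p$-additivity on disjoint supports with the damping factors $\epsilon/\|\tilde y_n\|$ (renormalizing each block image to norm exactly $\epsilon$) that pins the norm at $\epsilon$ while still annihilating the $x_n$ asymptotically. Reflexivity and the identity (\ref{lpclass}) enter, respectively, to produce the weak limit and to force the lower bound to be attained whenever that limit is nonzero, thereby reducing the whole statement to the block construction above; the finite dimensionality of the summands is what legitimizes the gliding hump passage to disjointly supported vectors.
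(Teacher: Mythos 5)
Your proposal is correct and follows essentially the same route as the paper, whose Lemma~\ref{lp111} runs exactly your weak-limit dichotomy for a minimizing sequence of $S=T-zI$: a rank-one perturbation when the weak limit $x$ is nonzero (the paper derives $\|Sx\|=m\|x\|$ via Lemma~\ref{lp2222} applied to the inverse on the closed range, where you compute it directly from (\ref{lpclass}) against the lower bound --- a minor simplification), and the same gliding-hump passage to disjointly supported blocks with images renormalized to norm exactly $\epsilon$, whose disjoint-support $\ell_p$-additivity pins $\|A\|\leq\epsilon$ (the paper dispatches the case $m<\epsilon$ by citing (\ref{ps3}) instead of your tail construction, an inessential difference). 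One cosmetic slip: your displayed formula for $(S+A)x_n$ drops the cross terms $\psi_m(x_n)$ for $m\neq n$; evaluate at the disjointly supported $\tilde x_n$ instead (as the paper does at its $y_{k_m}$), which still tends to $0$ since $\|x_n-\tilde x_n\|\to 0$ and $\|S+A\|$ is bounded.
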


In the case of compact operators, we can extend the last theorem.

\begin{theorem}\label{lp2}
Let $1<p\leq q<\infty$, $X$ be a $p$-space and $Y$ be a $q$-space.
Then for any $J\in L(X,Y)$ and $T\in \kk(X,Y)$ such that
$\|J+T\|>\|J\|$, the operator $J+T$ attains its norm. In particular,
any $p$-space belongs to $\cla$.
\end{theorem}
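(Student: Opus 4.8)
The plan is to produce a norm-attaining vector for $S:=J+T$ by extracting a weakly convergent maximizing sequence and then exploiting the near-additivity of the norm that the $p$- and $q$-space conditions provide on the domain and range respectively.

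First I would pick unit vectors $x_n\in X$ with $\|Sx_n\|\to\|S\|$. Since $X$ is a $p$-space it is reflexive, so its closed unit ball is weakly sequentially compact; passing to a subsequence I may assume $x_n\rightharpoonup x$ for some $x$ with $\|x\|=:a\leq 1$. Writing $u_n=x_n-x$, the sequence $\{u_n\}$ is weakly null, and after a further subsequence I may assume $\|u_n\|\to b$ and $\|Ju_n\|\to\beta$. Applying the defining property \eqref{lpclass} of a $p$-space to the vector $x$ and the weakly null sequence $\{u_n\}$, and using $\|x_n\|=\|x+u_n\|=1$, I obtain the domain-side relation $a^p+b^p=1$.

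Next I would analyse the image side. Because $T\in\kk(X,Y)$ and $u_n\rightharpoonup 0$, we have $Tu_n\to 0$ in norm, so $Su_n=Ju_n+Tu_n$ is weakly null with $\|Su_n\|\to\beta$. Since $Sx_n=Sx+Su_n$, the $q$-space property of $Y$ applied to $Sx$ and $\{Su_n\}$ yields $\|S\|^q=\alpha^q+\beta^q$, where $\alpha:=\|Sx\|$. The elementary operator bounds $\alpha\leq\|S\|\,a$ and $\beta\leq\|J\|\,b$ then combine into the single inequality $\|S\|^q(1-a^q)\leq\|J\|^q b^q$.

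The decisive and most delicate step is a purely scalar inequality in which the hypothesis $p\leq q$ is exactly what points the estimate in the right direction: since $a,b\in[0,1]$ and $q\geq p$ we have $a^q\leq a^p$ and $b^q\leq b^p$, hence $a^q+b^q\leq a^p+b^p=1$, i.e. $1-a^q\geq b^q$. Substituting this gives $\|S\|^q b^q\leq\|S\|^q(1-a^q)\leq\|J\|^q b^q$, and because $\|S\|=\|J+T\|>\|J\|$ this forces $b=0$. Then $a=1$, so $x$ is a unit vector, while $\beta\leq\|J\|\,b=0$ gives $\|Sx\|=\alpha=\|S\|$; thus $S$ attains its norm at $x$. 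The main obstacle is precisely to arrange the two Pythagorean decompositions simultaneously and to recognize that without $p\leq q$ the chain of inequalities reverses and the argument collapses. Finally, the ``in particular'' assertion follows by taking $Y=X$, $q=p$ and $J=I$, so that $\|J\|=1$ and the conclusion becomes exactly the statement that $X\in\cla$.
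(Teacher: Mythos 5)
Your proof is correct, and while it shares the paper's top-level skeleton (a weakly convergent maximizing sequence combined with the asymptotic Pythagorean identities $a^p+b^p=1$ on the domain side and $\|S\|^q=\alpha^q+\beta^q$ on the range side, closed out by the monotonicity $a^q+b^q\leq a^p+b^p$ for $p\leq q$), you organize the argument genuinely differently. The paper splits the work into two lemmas: Lemma~\ref{xnz}, which uses compactness of $T$ and the gap $\|J+T\|>\|J\|$ only to show via the triangle inequality that the weak limit $x$ is nonzero, and Lemma~\ref{lp2222}, which proves the exact proportionality $\|Sx\|=\|S\|\,\|x\|$ at the weak limit for an \emph{arbitrary} bounded operator $S$ between a $p$-space and a $q$-space (by contradiction, bounding the tail by $\|Su_n\|\leq\|S\|\,\|u_n\|$ and placing the strict inequality at the $x$-term). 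You instead inject the hypotheses at the tail: compactness upgrades the bound on the weakly null part from $\|S\|\,b$ to $\|J\|\,b$, and the gap $\|S\|>\|J\|$ then forces $b=0$ outright. This buys you a strictly stronger conclusion — $a=1$ and $\|x_n-x\|\to0$, i.e.\ the maximizing subsequence converges in norm to a unit maximizer — which is exactly the refinement the paper records separately as Concluding Remark~2 rather than proving inside Theorem~\ref{lp2}. What the paper's modularization buys in exchange is reusability: Lemma~\ref{lp2222} applies to non-compact operators and is invoked again in the proof of Lemma~\ref{lp111} (the $x\neq0$ case), hence in Theorem~\ref{lp1}, whereas your computation is self-contained but specific to the compact-perturbation setting. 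All your individual steps check out: the subsequence extractions are justified by boundedness of $\{\|u_n\|\}$ and $\{\|Ju_n\|\}$; $b\leq1$ follows from $a^p+b^p=1$; compact operators are completely continuous, so $Tu_n\to0$ in norm; and the degenerate case $J=0$ causes no trouble since $\|S\|>\|J\|\geq0$ keeps the final division legitimate.
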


Theorem~\ref{lp2} and Proposition~\ref{ll} imply the following
corollary.

\begin{corollary}\label{lp21}
Let $1<p<\infty$ and $X$ be a complex $p$-space. Then $(\ref{ps4})$
holds for any $T\in\kk(X)$.
\end{corollary}

Even a slight perturbation of the norm destroys the above results.
The following theorem provides a negative answer to
Question~\ref{q1}.

\begin{theorem}\label{simple} Let $1<p<\infty$, $1\leq q\leq\infty$,
$q\neq p$ and $X=\K\oplus_q\ell_p$. Then there exists a compact
operator $T$ on $X$ such that $I+T$ is invertible, $\|I+T\|>1$ and
$I+T$ does not attain its norm.
\end{theorem}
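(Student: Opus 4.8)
The plan is to exhibit the non-attaining operator as a compact perturbation of the identity that forces every maximising sequence to shed mass into the tail of $\ell_p$, where a vanishing diagonal penalty makes that mass admissible only in the limit. Write a generic element of $X=\K\oplus_q\ell_p$ as $(t,x)$ with $t\in\K$ and $x=\sum_n x_ne_n\in\ell_p$, so that $\|(t,x)\|=(|t|^q+\|x\|_p^q)^{1/q}$. I would fix a real $R>0$ and a sequence $\delta_n\in(0,1)$ with $\delta_n\to0$ and $\delta_1>0$, and set
\[
S(t,x)=\Bigl(t+Rx_1,\ \bigl((1-\delta_n)x_n\bigr)_{n\in\N}\Bigr),\qquad T=S-I .
\]
Then $T$ is the sum of the rank one operator $(t,x)\mapsto(Rx_1,0)$ and the diagonal operator $(t,x)\mapsto(0,(-\delta_nx_n)_n)$ whose entries tend to $0$, hence $T\in\kk(X)$; moreover $S$ is triangular with nonzero diagonal entries $1,(1-\delta_n)$ bounded away from $0$, so $I+T=S$ is invertible. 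Since $\|S(0,e_1)\|=(R^q+(1-\delta_1)^q)^{1/q}$, the gain $R$ can be chosen so that $\|I+T\|>1$.

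Next I would compute $\|S\|$ through weak-limit profiles, as in the analysis underlying Theorem~\ref{lp2}. As $X$ is reflexive, a norm-maximising sequence $z_k$ on the unit sphere has, after passing to a subsequence, a weak limit $z=(z_0,\zeta)$ and a tail mass $b=\lim\|z_k-z\|_p\ge0$; since $T$ is compact and $\K$ is finite dimensional, $z_k-z$ splits into a part tending to $0$ in $\K$ and a part $v_k\rightharpoonup0$ in $\ell_p$ with $\|v_k\|_p\to b$. Because the coordinate functional contribution and the diagonal factors attached to $v_k$ tend to $0$, one obtains
\[
\|S\|^q=\sup\Bigl[\,|z_0+R\zeta_1|^q+\bigl(\textstyle\sum_n(1-\delta_n)^p|\zeta_n|^p+b^p\bigr)^{q/p}\Bigr],
\]
the supremum taken over $(z_0,\zeta,b)$ subject to $|z_0|^q+(\|\zeta\|_p^p+b^p)^{q/p}=1$. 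Two simplifications apply: a unit of mass on a coordinate $n\ge2$ enters the objective with the factor $(1-\delta_n)<1$ but enters the constraint in full, so it is dominated by the tail term $b$; and the phases of $\zeta_1$ and $z_0$ should be aligned. This reduces matters to the two real variables $y=|\zeta_1|\ge0$ and $b\ge0$ (with $z_0$ fixed by the constraint), i.e. to maximising $(z_0+Ry)^q+((1-\delta_1)^py^p+b^p)^{q/p}$.

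The decisive point is to choose $R,\delta_1$ so that this maximum is some $V>1$ attained only at a profile with $b>0$. Differentiating in the split of a fixed source $\ell_p$-mass between $y$ (which also feeds the gain term) and $b$ (which does not, but is unpenalised), one finds the maximiser interior, $b>0$, precisely when the gain is small relative to the coordinate-$1$ penalty, quantitatively when $R^q<\bigl(1-(1-\delta_1)^p\bigr)(1-\delta_1)^{q-p}$ at the optimal mass. Granting such a choice, $V$ is approached by the genuine unit vectors $(z_0,ye_1+be_N)$ as $N\to\infty$, because $S$ acts on $e_N$ with the factor $(1-\delta_N)\to1$; but for every finite $N$ that factor is $<1$, so $\|S(z_0,ye_1+be_N)\|<V$, and no profile with $b=0$ reaches $V$ either. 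Hence $\|I+T\|=V>1$ is not attained.

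The main obstacle is exactly the simultaneous requirement $V>1$ and $b>0$: these two inequalities cut out a genuine but narrow window in $(R,\delta_1)$, and a short asymptotic computation as $\delta_1\to0$ shows (since the thresholds behave like $q\delta_1$ and $p\delta_1$) that the window is nonempty only when $q<p$. For $q>p$ I would instead pass to the adjoint: $(\K\oplus_q\ell_p)^*=\K\oplus_{q'}\ell_{p'}$ with $q'<p'$, so the operator produced above on $\K\oplus_{q'}\ell_{p'}$ yields, after checking that non-attainment survives the relevant duality, the required example; the extreme values $q\in\{1,\infty\}$ then appear as the boundary cases. Confirming that the reduced two-variable maximum is the global one (so that no spread-out or mixed-phase vector does better) and that non-attainment is stable under this duality step are the two places where I expect the real work to lie.
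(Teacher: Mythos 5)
Your outline is viable and, at the architectural level, mirrors the paper's proof: an explicit compact perturbation of the identity combining a coordinate-mixing ``gain'' with a strictly contractive diagonal whose factors tend to $1$, a reduction of $\|I+T\|$ to a finite-dimensional extremal problem over weak-limit profiles $(z_0,\zeta_1,b)$, non-attainment because the supremum is reached only ``at infinity'' where the diagonal penalty vanishes, and a duality step to cover the complementary half of the $(p,q)$-range. The genuine differences are two. First, your gain is the additive rank-one map $t\mapsto t+Rx_1$, whereas the paper uses the \emph{swap} $x_0\leftrightarrow x_1$ (i.e.\ $Sx=(x_1,x_0,\frac{2x_2}{3},\frac{3x_3}{4},\dots)$); as a consequence your direct regime is $q<p$ while the paper's is $p<q$, each dualized to the other via exactly the transfer you worry about --- this is the paper's Lemma~\ref{du1}/Corollary~\ref{du3}, which for reflexive $X$ shows non-attainment of $I+T$ passes to $I+T^*$ with no loss, so that step is unproblematic (note $(\K\oplus_q\ell_p)^*=\K\oplus_{q'}\ell_{p'}$ handles the boundary cases $q\in\{1,\infty\}$ just as in the paper). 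Second, and this is where your version costs real work: the swap operator needs no tunable parameters, and the paper's reduced extremal problem is solved exactly by Lagrange multipliers on the compact set $K=\{f(\alpha,\beta,\gamma)=1\}$, yielding $\|S\|=2^{\frac1p-\frac1q}$ with the \emph{unique} maximiser $(2^{-1/q},0,2^{-1/q})$, from which non-attainment is immediate. Your construction instead requires exhibiting a window in $(R,\delta_1)$ where $V>1$ and the global maximiser has $b>0$, and you only assert this via an asymptotic sketch. The sketch is directionally correct --- e.g.\ for $p=2$, $q=1$ one can solve your reduced problem in closed form: with $\epsilon=1-(1-\delta_1)^2$, any $0<R<\epsilon/\sqrt{1-\epsilon}$ gives $V=\sqrt{1+R^2/\epsilon}>1$ attained at a unique interior profile with $b>0$, and your boundary condition $R^q<\bigl(1-(1-\delta_1)^p\bigr)(1-\delta_1)^{q-p}$ has the right first-order form $R\lesssim p\delta_1$; a first-order-in-$\delta_1$ analysis also confirms that for $q>p$ the maximiser sits at $b=0$, so your restriction to $q<p$ is forced, not an artifact. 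So the two points you flag are fillable, but they are precisely the content that the paper's choice of the swap operator is engineered to avoid; to make your version a complete proof you would still need the compactness-plus-uniqueness analysis of the reduced maximum (the analogue of the paper's computation on $K$) carried out rigorously for general $1\leq q<p<\infty$, not just perturbatively.
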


Proposition~\ref{ll} and Theorem~\ref{simple} imply that for
$1<p<\infty$ and $1\leq q\leq\infty$, $q\neq p$ there are
$\epsilon>0$ and $T\in\kk(\C\oplus_q\ell_p)$ such that (\ref{ps4})
fails. Since $\K\oplus_q\ell_p$ is isomorphic to $\ell_p$, we see
that belonging to $\cla$ and validity of (\ref{ps4}) are renorming
sensitive properties. In particular, $\K\oplus_p\ell_2$ is
isomorphic to the Hilbert space $\ell_2$ for any $1\leq p\leq
\infty$ and belongs to $\cla$ if and only if $p=2$. However, for the
spaces $\K\oplus_p\ell_2$ the situation improves if we consider
finite rank operators instead of compact ones.

\begin{proposition}\label{simple1} Let $1\leq p\leq\infty$, $Y$ be a
finite dimensional Banach space and $X=Y\oplus_p\ell_2$. Then for
any bounded finite rank operator $T$ on $X$, the operator $I+T$
attains its norm.
\end{proposition}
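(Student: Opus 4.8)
The plan is to use reflexivity to extract a weakly convergent maximising sequence and then to exploit the Hilbert structure of the $\ell_2$-summand to turn the weak limit into a genuine maximiser. Since $Y$ is finite dimensional and $\ell_2$ is reflexive, their finite $\ell_p$-sum $X=Y\oplus_p\ell_2$ is reflexive, so its closed unit ball is weakly sequentially compact. Write $M=\|I+T\|$ and choose unit vectors $x_n$ with $\|(I+T)x_n\|\to M$. Passing to a subsequence, $x_n\to x$ weakly; writing $x_n=(a_n,h_n)$ and $x=(a,h)$ with $a_n,a\in Y$ and $h_n,h\in\ell_2$, the coordinate projections force $a_n\to a$ in norm in $Y$ (finite dimensionality upgrades weak to norm convergence) and $h_n\to h$ weakly in $\ell_2$. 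Because $T$ is compact, $Tx_n\to Tx$ in norm, so $(I+T)x_n=(I+T)x+(x_n-x)+o(1)$, where the $o(1)$ term tends to $0$ in norm and $x_n-x\to0$ weakly, its $Y$-component tending to $0$ in norm.

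The next step is to read off the limiting norms using the inner product structure of $\ell_2$. Set $w=(I+T)x=:(b,g)$ and let $\beta^2=\lim_n\|h_n-h\|_2^2$, which exists after a further subsequence. Since $h_n-h\to0$ weakly, expanding the square shows that the $\ell_2$-component of $(I+T)x_n$ has squared norm converging to $\|g\|_2^2+\beta^2$, while its $Y$-component converges in norm to $b$; combining the two summands gives $M^p=\|b\|_Y^p+(\|g\|_2^2+\beta^2)^{p/2}$, with the obvious modification for $p=\infty$. Running the same computation on the constraint $\|x_n\|=1$ yields $1=\|a\|_Y^p+(\|h\|_2^2+\beta^2)^{p/2}$. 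Thus all the relevant data are encoded in the finitely many quantities $a,b\in Y$, $g,h\in\ell_2$ and the scalar $\beta$, which records the $\ell_2$-mass that escapes to ``infinity'' under the weak limit.

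It then remains to reattach this escaping mass along a single orthogonal direction, and here the finite rank hypothesis is decisive. The map $\eta\mapsto T(0,\eta)$ from $\ell_2$ to $X$ has finite dimensional range, so its kernel has finite codimension, and intersecting it with the two hyperplanes $\{h\}^\perp$ and $\{g\}^\perp$ still leaves an infinite dimensional subspace. I would pick a unit vector $e$ in that subspace, so that $T(0,e)=0$ and $e\perp h$, $e\perp g$, and set $\tilde x=x+(0,\beta e)=(a,h+\beta e)$. Then $e\perp h$ gives $\|\tilde x\|^p=\|a\|_Y^p+(\|h\|_2^2+\beta^2)^{p/2}=1$, while $T\tilde x=Tx$ forces $(I+T)\tilde x=(I+T)x+(0,\beta e)=(b,g+\beta e)$, so $e\perp g$ gives $\|(I+T)\tilde x\|^p=\|b\|_Y^p+(\|g\|_2^2+\beta^2)^{p/2}=M^p$. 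Hence $I+T$ attains its norm at $\tilde x$ (the case $\beta=0$ is included, with $\tilde x=x$).

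The main obstacle to keep in sight is that the identity operator is not weakly continuous, so norm and weak limit fail to cooperate and one genuinely loses control of the escaping $\ell_2$-mass $\beta$ in the weak limit. The whole point is that in a Hilbert summand this lost mass collapses to a single scalar $\beta$ and can be recovered along one orthogonal vector, and that finite rank, as opposed to merely compact, is exactly what permits placing this vector in the kernel of $T(0,\cdot)$ so that the reconstruction is \emph{exact} rather than approximate. This is consistent with Theorem~\ref{simple}, where a genuinely compact perturbation on a closely related space fails to attain its norm.
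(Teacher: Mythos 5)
Your proof is correct and follows essentially the same route as the paper, which establishes the slightly more general Theorem~\ref{simple2} for proper extensions $\K^n\times\H$: there too one takes a weak limit $(t,x)$ of a maximising sequence, records the escaping Hilbert mass $\alpha=\lim_k\|x_k-x\|$, and reattaches it along a vector $u$ with $\|u\|=\alpha$ chosen in the finite-codimensional subspace $\{v\in\H:T(0,v)=0,\ \langle v,x\rangle=\langle v,y\rangle=0\}$ --- exactly your $\beta e$. The only difference is cosmetic: the paper works with an abstract norm $\phi(t,\|x\|)$ in place of your explicit $\ell_p$-sum formulas, so your argument is a faithful specialisation with no gaps.
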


The last proposition suggests that the answer to Question~\ref{q1}
might be affirmative if we replace the compactness condition by the
stronger one of $T$ having finite rank. Unfortunately this is not
the case.

\begin{proposition}\label{ex} There exists a norm $\|\cdot\|$ on
$\ell_2$, equivalent to the original norm $\|\cdot\|_2$, and a rank
one operator $T$ on $\ell_2$ such that $T^2=0$, $\|I+T\|=2$ $($with
respect to the norm $\|\cdot\|$ on $\ell_2)$ and the norm of $I+T$
is not attained.
\end{proposition}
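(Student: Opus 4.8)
The plan is to produce the equivalent norm and the operator by hand and then verify the three requirements separately. Since Theorem~\ref{lp2} shows that $I+T$ attains its norm whenever the underlying space is a $p$-space, and Proposition~\ref{simple1} shows the same for finite-rank $T$ on any space of the form $Y\oplus_p\ell_2$ with $Y$ finite dimensional, the renorming I build must be genuinely anisotropic: it cannot be Hilbertian, nor an $\ell_p$-direct sum with the tail carrying its Euclidean norm, for otherwise attainment would be forced. Accordingly I fix an orthonormal basis $\{e_n\}_{n\ge 1}$ of $\ell_2$, a functional $a$ and a vector $u$ with $\langle a,u\rangle=0$, and set $Tx=\langle x,a\rangle\,u$; then $T$ has rank one and $T^2=0$ automatically, and I reserve the freedom in choosing $a$, $u$ and the renorming to arrange $\|I+T\|=2$.

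First I would design the norm as an $\ell_2$-type combination $\|x\|^2=\|x\|_2^2+p(x)^2$ of the Euclidean norm with an auxiliary seminorm $p$ that measures the best alignment of $x$ with \emph{shifted copies} of $u$, i.e.\ a quantity of the form $p(x)=\sup_{k}\bigl|\langle x,S^{k}u\rangle\bigr|$ for a right-shift $S$, suitably scaled. The role of $p$ is twofold: it keeps the norm equivalent to $\|\cdot\|_2$ (one has $\|x\|_2\le\|x\|\le C\|x\|_2$ because $p(x)\le\|x\|_2$), and it makes the sphere anisotropic in exactly the direction that $T$ exploits. Because $S^{k}u\rightharpoonup 0$, the alignment captured by $p$ is \emph{asymptotic}: it is best realised not by any fixed unit vector but by vectors living further and further out along the basis.

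With the norm in place I would verify the three properties in order. The bound $\|I+T\|\le 2$ is the routine estimate: writing $(I+T)x=x+\langle x,a\rangle u$ and using $\langle a,u\rangle=0$ together with the definition of $\|\cdot\|$, one controls both the Euclidean part and the seminorm part of $(I+T)x$ by twice $\|x\|$. For the matching lower bound I would exhibit a sequence $x_n$ whose ``escaping'' tail is a far shifted copy of $u$; such a sequence is weakly null modulo a fixed head, satisfies $\|x_n\|\to 1$, and has $\|(I+T)x_n\|\to 2$ precisely because the shifted copy becomes asymptotically aligned with $u$ inside the seminorm $p$, so the two contributions $x_n$ and $\langle x_n,a\rangle u$ add up in norm. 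This shows $\|I+T\|=2$.

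The main obstacle, and the heart of the argument, is the non-attainment together with the exactness of the constant $2$. I would argue by contradiction: if $\|x\|=1$ and $\|(I+T)x\|=2$, then equality must propagate through every inequality used in the upper bound, which pins down the Euclidean and the $p$-components of $x$ simultaneously and forces $x$ to be perfectly aligned with $u$ in the seminorm $p$. The point is that the supremum defining $p$ is attained only in the weak limit $k\to\infty$, so perfect alignment would require the relevant mass of $x$ to sit ``at infinity'' along the basis while still being a genuine vector of $\ell_2$ --- an impossibility. This is exactly where the anisotropy and the weak nullity of $S^{k}u$ are used, and it is the delicate step: the seminorm $p$ must be engineered so that the sup $2$ is genuinely reached in the limit (giving $\|I+T\|=2$, not less) yet is never reached by a fixed unit vector (giving non-attainment), while $\|\cdot\|$ remains a bona fide norm equivalent to $\|\cdot\|_2$. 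Balancing these competing demands --- and checking the triangle inequality for $\|\cdot\|$ --- is the part I expect to require the most care.
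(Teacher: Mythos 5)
You have not given a proof; you have given a design brief in which every decisive verification is deferred. You never specify $a$, $u$, the shift, or the scaling of $p$; the upper bound $\|I+T\|\le 2$ is asserted to be ``routine''; the lower bound rests on an unverified claim of asymptotic alignment; and the non-attainment argument is described only as ``equality must propagate through every inequality'' --- which is precisely the statement to be proved, not an argument --- and you yourself concede that balancing the competing demands ``is the part I expect to require the most care.'' Worse, there is concrete reason to doubt that your specific shape $\|x\|^2=\|x\|_2^2+p(x)^2$ with $p(x)=\sup_k|\langle x,S^ku\rangle|$ can be made to work as sketched. To drive $\|(I+T)x_n\|=\|x_n+\langle x_n,a\rangle u\|$ up to $2$ with $\|x_n\|=1$, the natural chain of estimates must be asymptotically saturated: saturating $|\langle x_n,a\rangle|\le\|a\|_2\|x_n\|_2$ pulls $x_n$ toward the direction of $a$, while saturating the triangle inequality in the Euclidean component of $\|x_n+\langle x_n,a\rangle u\|$ pulls $x_n$ toward the direction of $u$; since $\langle a,u\rangle=0$ these requirements are incompatible, so the true value of $\|I+T\|$ is governed by a trade-off you never compute, and nothing in your construction pins it at exactly $2$ rather than at some other value at which attainment might well occur. (Incidentally, the one point you flag as delicate --- the triangle inequality for $\|\cdot\|$ --- is trivial: $x\mapsto(\|x\|_2,p(x))$ is componentwise subadditive and the Euclidean norm on $\R_+^2$ is monotone.)

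Compare this with what the paper actually has to do. The unit ball is built in the primal, not via a sup of functionals: it is the weakly compact absolutely convex hull of a distorted Euclidean ball $B_1$ and the two weakly null sequences $e_2+e_{n+2}$ and $q_n(e_1+e_2+e_{n+2})$ with $1/2<q_n<1/\sqrt2$, $q_n\to 1/2$. Weak compactness (Lemmas~\ref{conv1} and~\ref{conv2}) guarantees that the ball is closed and that the infimum defining the norm is an attained minimum, as in (\ref{normm}); this is what makes exact computations possible. Lemma~\ref{nor} then gives $\|e_2+e_{n+2}\|=1$ and $q_n\|e_1+e_2+e_{n+2}\|=1$, whence $\|S\|\ge q_n^{-1}\to 2$ for $Su=u+u_2e_1$, and Lemma~\ref{oper} supplies the pointwise strict inequality $\|Su\|<2\|u\|$ for every $u\ne 0$ by verifying that each coefficient in the resulting estimate --- $1$ in front of $\|x'\|_2$, $5/3$ in front of $\|(x_1,x_2)\|_2$ (the norm of $\begin{pmatrix}1&1\\0&1\end{pmatrix}$ on Euclidean $\K^2$ being $\bigl(\frac{3+\sqrt5}2\bigr)^{1/2}<\frac53$), $1+q_n<\frac74$ in front of $\|\beta\|_1$, and $q_n^{-1}<2$ in front of each $|\alpha_n|$ --- is strictly below $2$. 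Your sketch correctly identifies the target phenomenon (a weakly null direction along which the supremum $2$ is approached but never reached), but until you exhibit a concrete norm for which you can prove both the exact supremum $2$ and a quantitative strict bound of the type of Lemma~\ref{oper} valid for all nonzero vectors simultaneously, the proposal has a genuine gap exactly where the proposition is hard.
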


The equality $T^2=0$ for the operator from the above proposition
ensures invertibility of $T-I$ and the equality $I+T=-(T-I)^{-1}$.
Thus $(T-I)^{-1}$ does not attain its norm and $\|(T-I)^{-1}\|=2>1$.
By Proposition~\ref{ll0}, $-1\in
\Sigma_{1/2}(T)\setminus\Sigma^0_{1/2}(T)$ and (\ref{ps4}) fails for
$T$ with $\epsilon=1/2$.

\section{Proof of Propositions~\ref{ll0} and~\ref{ll}}

The following lemma is a known fact \cite{CCH,Har}. For convenience
of the reader we reproduce its short proof.

\begin{lemma}\label{att1} Let $X$ be a complex Banach space
$\epsilon>0$ and $T\in L(X)$. Assume also that
$z\in\Sigma_\epsilon(T)\setminus\sigma(T)$ and $(T-zI)^{-1}$ attains
its norm. Then there is $A\in L(X)$ such that $\|A\|\leq\epsilon$
and $z\in\sigma(T+A)$.
\end{lemma}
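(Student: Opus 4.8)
The plan is to produce the perturbation $A$ as a rank-one operator tailored to the vector on which $(T-zI)^{-1}$ attains its norm. First I would set $R=(T-zI)^{-1}$, which is a well-defined bounded operator since $z\notin\sigma(T)$, and record that the hypothesis $z\in\Sigma_\epsilon(T)$ gives $\|R\|\geq\epsilon^{-1}$. Using that $R$ attains its norm, I would pick $x\in X$ with $\|x\|=1$ and $\|Rx\|=\|R\|$, and set $y=Rx$. Then $\|y\|=\|R\|$ and, applying $T-zI$ to both sides, $(T-zI)y=x$.

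Next I would reverse-engineer what $A$ must do. To force $z\in\sigma(T+A)$ it suffices to make $y$ an eigenvector of $T+A$ at $z$, that is $(T+A-zI)y=0$; since $(T-zI)y=x$, this is exactly the requirement $Ay=-x$. I would realize such an $A$ with small norm via the Hahn--Banach theorem: choose a norming functional $g\in X^*$ with $\|g\|=1$ and $g(y)=\|y\|$, and define the rank-one operator $Av=-\|y\|^{-1}g(v)x$. Then $Ay=-x$ as required, and $\|A\|=\|x\|\,\|g\|/\|y\|=1/\|y\|=1/\|R\|\leq\epsilon$, the last inequality being precisely $\|R\|\geq\epsilon^{-1}$. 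Since $\|y\|=\|R\|\geq\epsilon^{-1}>0$, the vector $y$ is nonzero and lies in the kernel of $T+A-zI$, so $z$ is an eigenvalue of $T+A$, and in particular $z\in\sigma(T+A)$, which completes the argument.

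There is no serious obstacle once the construction is set up; the one point that genuinely uses the norm-attaining hypothesis is the sharp norm bound $\|A\|\leq\epsilon$. Without an actual maximizer $x$, one could only take $x$ nearly norming and would be forced to accept $\|A\|$ slightly exceeding $1/\|R\|$, which need not stay $\leq\epsilon$; the role of attainment is exactly to upgrade this single critical inequality from an approximate bound to an honest $\leq$. This is the crux distinguishing the non-strict pseudospectrum $\Sigma_\epsilon$ from the open version $\sigma_\epsilon$, and it is why attainment of the norm of the resolvent is the natural hypothesis here.
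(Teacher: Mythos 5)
Your proof is correct and follows essentially the same route as the paper: a rank-one perturbation $Av=-\|y\|^{-1}g(v)x$ built from a Hahn--Banach norming functional for $y=(T-zI)^{-1}x$, making $y$ an eigenvector of $T+A$ at $z$ with $\|A\|=\|(T-zI)^{-1}\|^{-1}\leq\epsilon$. The only cosmetic difference is that the paper normalizes $y$ to a unit vector before applying Hahn--Banach, whereas you carry the factor $\|y\|=\|R\|$ explicitly; the argument is otherwise identical.
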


\begin{proof} Since $(T-zI)^{-1}$ attains its norm, there exist
$x,y\in X$ such that $\|y\|=\|x\|=1$ and $(T-zI)^{-1}x=cy$, where
$c=\|(T-zI)^{-1}\|$. Using the Hahn--Banach theorem, we can pick
$f\in X^*$ for which $\|f\|=f(y)=1$. Consider the operator
$$
A\in L(X),\qquad Au=-c^{-1}f(u)x.
$$
Clearly $\|A\|\leq c^{-1}$. Since $z\in\Sigma_\epsilon(T)$, we have
$c^{-1}\leq \epsilon$. Thus $\|A\|\leq\epsilon$. Moreover,
$Ay=-c^{-1}x$. From the equality $(T-zI)^{-1}x=cy$ it follows that
$Ty=zy+c^{-1}x$. Hence $(T+A)y=zy$ and $z\in\sigma(T+A)$.
\end{proof}

\subsection{Proof of Proposition~\ref{ll0}}

If $X$ is finite dimensional, then any $S\in L(X)$ attains its norm
and according to Lemma~\ref{att1}, both (\ref{ll0}.1) and
(\ref{ll0}.2) are satisfied. Thus for the rest of the proof, we can
assume that $X$ is infinite dimensional.

Assume that (\ref{ll0}.2) is satisfied. Since $X$ is infinite
dimensional and $T$ is compact, $\|(T-zI)^{-1}\|\geq |z|^{-1}$. If
the relation $\|(T-zI)^{-1}\|=\epsilon^{-1}>|z|^{-1}$ fails, then
either $\|(T-zI)^{-1}\|>\epsilon^{-1}$ or
$\|(T-zI)^{-1}\|=\epsilon^{-1}=|z|^{-1}$. If
$\|(T-zI)^{-1}\|>\epsilon^{-1}$, then $z\in\sigma_\epsilon(T)$ and,
according to (\ref{ps3}), $z\in\Sigma^0_\epsilon(T)$. If
$\|(T-zI)^{-1}\|=\epsilon^{-1}=|z|^{-1}$, then $\|A\|=\epsilon$,
where $A=zI$. Since $X$ is infinite dimensional and $T$ is compact,
we have $0\in\sigma(T)$. Hence $z\in \sigma(T+zI)=\sigma(T+A)$. Thus
$z\in\Sigma^0_\epsilon(T)$. It remains to consider the case when
$\|(T-zI)^{-1}\|=\epsilon^{-1}>|z|^{-1}$ and $(T-zI)^{-1}$ attains
its norm. In this case, from Lemma~\ref{att1} it follows that $z\in
\Sigma^0_\epsilon(T)$. The implication
$(\ref{ll0}.2)\Longrightarrow(\ref{ll0}.1)$ is verified.

Assume now that (\ref{ll0}.1) is satisfied. That is, there exists
$A\in L(X)$ such that $\|A\|\leq \epsilon$ and $z\in\sigma(T+A)$.
Hence $0\in \sigma(T-zI+A)$. Suppose that (\ref{ll0}.2) fails. Then
$\|(T-zI)^{-1}\|=\epsilon^{-1}>|z|^{-1}$ and the norm of
$(T-zI)^{-1}$ is not attained. Since $\|A\|\leq\epsilon$ and
$|z|>\epsilon$, the operator $-zI+A$ is invertible. Then $T-zI+A$ is
a Fredholm operator of index zero as a sum of a compact operator $T$
and an invertible operator $-zI+A$. Since $0\in \sigma(T-zI+A)$,
$T-zI+A$ is non-invertible and therefore, being a Fredholm operator
of index zero, it has non-trivial kernel. Thus we can pick $x\in X$
such that $\|x\|=1$ and $(T-zI+A)x=0$. It follows that $-Ax=(T-zI)x$
and therefore $x=-(T-zI)^{-1}Ax$. Using the relations
$\|(T-zI)^{-1}\|=\epsilon^{-1}$ and $\|A\|\leq\epsilon$, we obtain
$$
1=\|x\|=\|-(T-zI)^{-1}Ax\|\leq \epsilon^{-1}\|Ax\|\leq
\epsilon^{-1}\|A\|\|x\|\leq \|x\|=1.
$$
Obviously, all inequalities in the above display should be
equalities which can only happen if $\|A\|=\|Ax\|=\epsilon$. Then
$\|y\|=1$, where $y=-\epsilon^{-1}Ax$. Since $-(T-zI)^{-1}Ax=x$, we
obtain $(T-zI)^{-1}y=\epsilon^{-1}x$. Thus
$\|(T-zI)^{-1}y\|=\epsilon^{-1}=\|(T-zI)^{-1}\|$. That is,
$(T-zI)^{-1}$ attains its norm on $y$. This contradiction completes
the proof of the implication
$(\ref{ll0}.1)\Longrightarrow(\ref{ll0}.2)$ and that of
Proposition~\ref{ll0}.

\subsection{Proof of Proposition~\ref{ll}}

First, assume that (\ref{ll}.2) is satisfied. Let also $T\in\kk(X)$,
$\epsilon>0$ and $z\in \Sigma_\epsilon(T)$. According to
(\ref{ps5}), it suffices to show that $z\in\Sigma^0_\epsilon(T)$. By
Proposition~\ref{ll0}, the latter happens if and only if
(\ref{ll0}.2) is satisfied. Assume that it is not the case. Then
$\|(T-zI)^{-1}\|=\epsilon^{-1}>|z|^{-1}$ and the norm of
$(T-zI)^{-1}$ is not attained. On the other hand,
$(T-zI)^{-1}=-z^{-1}(I+S)$, where $S=-z(T-zI)^{-1}-I$ is compact.
Moreover $\|I+S\|>1$ since $\|(T-zI)^{-1}\|>|z|^{-1}$. By
(\ref{ll}.2), $I+S$ attains its norm and therefore so does
$(T-zI)^{-1}$. This contradiction proves the implication
$(\ref{ll}.2)\Longrightarrow(\ref{ll}.1)$.

Next, assume that (\ref{ll}.1) is satisfied, $T\in\kk(X)$, $I+T$ is
invertible and $c=\|I+T\|>1$. Let $S=(I+T)^{-1}-I$. Clearly $S$ is
compact and $I+T=(S+I)^{-1}$. Let $\epsilon=c^{-1}$. Since
$\|(S+I)^{-1}\|=\|I+T\|=\epsilon^{-1}>1$, we have
$-1\in\Sigma_\epsilon(S)$. According to (\ref{ll}.2), $-1\in
\Sigma_\epsilon^0(S)$. Proposition~\ref{ll0} implies now that
$(S+I)^{-1}=I+T$ attains its norm. This completes the proof of the
implication $(\ref{ll}.1)\Longrightarrow(\ref{ll}.2)$ and that of
Proposition~\ref{ll}.

\section{$\ell_p$-direct sums of finite dimensional Banach spaces}

Throughout this section $1<p<\infty$ and $X$ is the $\ell_p$-direct
sum of a family $\{X_\alpha:\alpha\in\Lambda\}$ of finite
dimensional Banach spaces. For $x\in X$, the support of $x$ is the
set
$$
\supp(x)=\{\alpha\in\Lambda:x_\alpha\neq 0\}.
$$
From the definition of the $\ell_p$-direct sum it follows that the
support of any element of $X$ is at most countable. For a subset $B$
of $\Lambda$, we consider $P_B\in L(X)$ defined by the formula
\begin{equation}\label{pb}
(P_Bx)_\alpha=\left\{\begin{array}{ll}x_\alpha&\text{if $\alpha\in
B$,}\\0&\text{if $\alpha\notin B$.}\end{array} \right.
\end{equation}
Clearly $P_B$ is a linear projection and $\|P_B\|=\|I-P_B\|=1$ if
$B$ is non-empty and $B\neq\Lambda$.

\begin{lemma} \label{weak1} Let $\{x_n\}_{n\in\N}$ be a sequence
in $X$ weakly convergent to zero and $\{\epsilon_k\}_{k\in\N}$ be a
sequence of positive numbers. Then there exist a strictly increasing
sequence $\{n_k\}_{k\in\N}$ of positive numbers and a sequence
$\{u_k\}_{k\in\N}$ of elements of $X$ such that
$\|x_{n_k}-u_k\|<\epsilon_k$ for each $k\in\N$ and the sets $\supp
(u_k)$ are finite and pairwise disjoint.
\end{lemma}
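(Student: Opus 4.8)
The plan is to run a gliding hump (diagonal) argument, extracting a subsequence whose terms can be truncated to pairwise disjoint finite blocks of coordinates. Two structural facts about the $\ell_p$-sum will drive everything. First, because the norm is the $\ell_p$-aggregate of the coordinate norms, the projections $P_B$ split the norm additively: $\|x\|^p=\|P_Bx\|^p+\|(I-P_B)x\|^p$ for every $B\subseteq\Lambda$. Consequently each individual $x_n$ can be approximated in norm by a finitely supported element, since $\sum_{\alpha}\|(x_n)_\alpha\|^p<\infty$ forces the tail $\|(I-P_F)x_n\|$ to be small for a suitable finite $F\subseteq\Lambda$. Second, and this is the step that genuinely uses the finite dimensionality of the summands, for any finite $F\subseteq\Lambda$ the block projection $P_F$ maps $X$ into the finite dimensional space $\bigoplus_{\alpha\in F}X_\alpha$; being bounded it is weak-to-weak continuous, so $P_Fx_n\to 0$ weakly, and in finite dimensions weak convergence is norm convergence. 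Hence $\|P_Fx_n\|\to 0$ as $n\to\infty$.

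With these two facts I would build $n_k$ and pairwise disjoint finite sets $F_k$ by induction, setting $u_k=P_{F_k}x_{n_k}$. Start by choosing $n_1$ and, using the first fact, a finite $F_1$ with $\|x_{n_1}-P_{F_1}x_{n_1}\|<\epsilon_1$. At stage $k$, having produced $n_1<\dots<n_{k-1}$ and pairwise disjoint finite $F_1,\dots,F_{k-1}$, put $E_{k-1}=F_1\cup\dots\cup F_{k-1}$, a finite set. The second fact lets me choose $n_k>n_{k-1}$ so large that $\|P_{E_{k-1}}x_{n_k}\|<\epsilon_k/2$, which pushes the mass of $x_{n_k}$ off the already-used coordinates. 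The first fact, applied to the single element $x_{n_k}$, then supplies a finite $F_k\subseteq\Lambda\setminus E_{k-1}$ with $\|P_{\Lambda\setminus(E_{k-1}\cup F_k)}x_{n_k}\|<\epsilon_k/2$.

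Setting $u_k=P_{F_k}x_{n_k}$, the sets $\supp(u_k)\subseteq F_k$ are finite and, by construction, pairwise disjoint. Since $F_k$ is disjoint from $E_{k-1}$ we have $x_{n_k}-u_k=P_{E_{k-1}}x_{n_k}+P_{\Lambda\setminus(E_{k-1}\cup F_k)}x_{n_k}$, so the triangle inequality gives $\|x_{n_k}-u_k\|<\epsilon_k/2+\epsilon_k/2=\epsilon_k$, as required.

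The only delicate point is the weak-to-norm passage on finite coordinate blocks; everything else is bookkeeping. I would therefore state that step carefully, since it is exactly where the hypothesis that each $X_\alpha$ is finite dimensional enters: for infinite dimensional summands $\|P_Fx_n\|$ need not tend to zero, and the disjointness of the blocks could not be arranged in the same way.
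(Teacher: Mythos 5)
Your proposal is correct and is essentially the paper's own proof: the same gliding hump induction, with $u_k=P_{F_k}x_{n_k}$ for $F_k=A\setminus E_{k-1}$ matching the paper's $u_k=P_Ax_{n_k}-P_Cx_{n_k}$, and the same triangle-inequality estimate $\epsilon_k/2+\epsilon_k/2$. The one cosmetic difference is that the paper justifies $\|P_{E_{k-1}}x_n\|\to 0$ by noting the finite-rank projection is compact, whereas you argue via weak-to-weak continuity into a finite dimensional range --- the same fact, correctly isolated as the point where finite dimensionality of the summands enters.
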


\begin{proof} We construct the required sequences inductively. On
the first step we take $n_1=1$, pick a finite subset $B$ of
$\Lambda$ such that $\|x_1-P_{B}x_1\|<\epsilon_1$ and put
$u_1=P_{B}x_1$.

Assume now that $k\geq 2$, $n_1<{\dots}<n_{k-1}$,
$u_1,\dots,u_{k-1}$ are vectors in $X$ with pairwise disjoint finite
supports such that $\|x_{n_j}-u_j\|<\epsilon_j$ for $1\leq j\leq
k-1$. Let now $C$ be the union of $\supp (u_j)$ for $1\leq j\leq
k-1$. Since $C$ is finite, $P_C$ is a compact operator and therefore
$\|P_C x_n\|\to 0$ as $n\to\infty$. Thus we can pick $n_k>n_{k-1}$
such that $\|P_C x_{n_k}\|<\epsilon_k/2$. Next, choose a finite
subset $A$ of $\Lambda$ such that $C\subseteq A$ and
$\|x_{n_k}-P_Ax_{n_k}\|<\epsilon_k/2$ and put $u_k=P_Ax_{n_k}-P_C
x_{n_k}$. Clearly $\supp(u_k)\subseteq A\setminus C$ and therefore
$u_k$ has finite support and the supports of $u_1,\dots,u_k$ are
pairwise disjoint. Finally,
$$
\|x_{n_k}-u_k\|= \|P_Cx_{n_k}+(x_{n_k}-P_Ax_{n_k})\|\leq
\|x_{n_k}-P_Ax_{n_k}\|+\|P_C x_{n_k}\|<
\epsilon_k/2+\epsilon_k/2=\epsilon_k.
$$

The description of the inductive construction of sequences $\{n_k\}$
and $\{u_k\}$ is now complete.
\end{proof}

\begin{lemma}\label{weak2} $X$ is a $p$-space.
\end{lemma}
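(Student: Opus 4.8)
The plan is to check the two defining properties of a $p$-space directly. Reflexivity of $X$ is standard and I would simply record it: for $1<p<\infty$ the $\ell_p$-direct sum of a family of reflexive spaces is reflexive, and every finite dimensional $X_\alpha$ is reflexive. The substance of the lemma is the asymptotic relation (\ref{lpclass}), so I would fix $x\in X$ and a weakly null sequence $\{u_n\}_{n\in\N}$, noting that weakly convergent sequences are bounded, so $\sup_n\|u_n\|<\infty$.

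The guiding idea is that, up to arbitrarily small errors, $x$ is carried by finitely many coordinates, whereas the mass of $u_n$ eventually escapes from that finite set, so that $x$ and $u_n$ become asymptotically disjointly supported. Concretely, I would fix $\delta>0$ and, using that $\supp(x)$ is countable with $\sum_\alpha\|x_\alpha\|^p<\infty$, choose a finite $F\subseteq\Lambda$ with $\|x-P_Fx\|<\delta$. Since $F$ is finite and each $X_\alpha$ is finite dimensional, $P_F$ has finite dimensional range and hence is compact; as compact operators carry weakly null sequences to norm null ones, $\|P_Fu_n\|\to0$ as $n\to\infty$. The point of the truncation is that $P_Fx$ and $(I-P_F)u_n$ have disjoint supports, contained in $F$ and in $\Lambda\setminus F$ respectively, so that the $\ell_p$ norm is exactly additive on them: $\|P_Fx+(I-P_F)u_n\|=(\|P_Fx\|^p+\|(I-P_F)u_n\|^p)^{1/p}$.

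To finish I would compare $\|x+u_n\|$ with the right-hand side of (\ref{lpclass}) through this exact expression. Writing $x+u_n=(P_Fx+(I-P_F)u_n)+((I-P_F)x+P_Fu_n)$, the triangle inequality gives $\bigl|\|x+u_n\|-\|P_Fx+(I-P_F)u_n\|\bigr|<\delta+\|P_Fu_n\|$. The main obstacle is securing a bound that is uniform in $n$ despite $\|u_n\|$ varying, and this is handled by the observation that $\phi(s,t)=(s^p+t^p)^{1/p}$ is a norm on $\R^2$, hence $1$-Lipschitz for the $\ell_1$-metric: $|\phi(s_1,t_1)-\phi(s_2,t_2)|\le|s_1-s_2|+|t_1-t_2|$. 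Applying this with $(s_1,t_1)=(\|P_Fx\|,\|(I-P_F)u_n\|)$ and $(s_2,t_2)=(\|x\|,\|u_n\|)$, together with $\bigl|\|P_Fx\|-\|x\|\bigr|\le\delta$ and $\bigl|\|(I-P_F)u_n\|-\|u_n\|\bigr|\le\|P_Fu_n\|$, yields $\bigl|\|x+u_n\|-(\|x\|^p+\|u_n\|^p)^{1/p}\bigr|\le2\delta+2\|P_Fu_n\|$. Letting $n\to\infty$ gives $\limsup_{n\to\infty}\bigl|\|x+u_n\|-(\|x\|^p+\|u_n\|^p)^{1/p}\bigr|\le2\delta$, and since $\delta>0$ is arbitrary, (\ref{lpclass}) follows. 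I note that this direct argument does not actually require Lemma~\ref{weak1}, although that lemma could instead be used to realize the disjoint supports along a subsequence.
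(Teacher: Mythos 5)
Your argument is correct and is essentially the paper's own proof: the paper likewise truncates $x$ by a finite-support projection $P_B$, uses compactness of $P_B$ to get $\|P_Bu_n\|\to 0$, and exploits the exact $\ell_p$-additivity of $\|P_Bx+(I-P_B)u_n\|$ on disjoint supports. Your only addition is to make the error estimate explicit via the $1$-Lipschitz property of $(s,t)\mapsto(s^p+t^p)^{1/p}$ for the $\ell_1$-metric, which is a clean way of justifying the comparison the paper asserts "for all sufficiently large $n$"; your remark that Lemma~\ref{weak1} is not needed here also matches the paper, which uses that lemma only later.
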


\begin{proof} Since $1<p<\infty$, $X$ is reflexive as an
$\ell_p$-direct sum of reflexive Banach spaces. Let $x\in X$ and
$\{u_n\}_{n\in\N}$ be a sequence in $X$ weakly convergent to $0$.
Let also $\epsilon>0$. Pick a finite subset $B$ of $\Lambda$ such
that $\|x-P_Bx\|<\epsilon$. Since $P_B$ is a compact operator and
$\{u_n\}$ converges weakly to 0, we have $\|P_Bu_n\|\to 0$ as
$n\to\infty$. Let $x_n=u_n-P_Bu_n$. Then $\|x_n-u_n\|\to 0$ as
$k\to\infty$ and supports of $x_n$ do not meet $B$. Since the
support of $P_Bx$ is contained in $B$, the supports of $x_n$ do not
intersect the support of $P_Bx$ and from the definition of the norm
on $X$ it follows that
$$
\|P_Bx+x_n\|=(\|P_Bx\|^p+\|x_n\|^p)^{1/p}.
$$
Since $\|x-P_Bx\|<\epsilon$ and $\|x_n-u_n\|\to 0$ as $n\to\infty$,
we see that
$$
|\|P_Bx+x_n\|-\|x+u_n\||<\epsilon\ \ \text{and}\ \
\bigl|(\|x\|^p+\|u_n\|^p)^{1/p}-(\|P_Bx\|^p+\|x_n\|^p)^{1/p}\bigr|<\epsilon
$$
for all sufficiently large $n$. From the last two displays it
follows that
$$
\|x+u_n\|-(\|x\|^p+\|u_n\|^p)^{1/p}<2\epsilon
$$
for all sufficiently large $n$. Since $\epsilon>0$ is arbitrary, the
equality (\ref{lpclass}) follows.
\end{proof}

\subsection{Proof of Proposition~\ref{uvuv}}

By Lemma~\ref{weak2}, the class of $p$-spaces contains
$\ell_p$-direct sums of finite dimensional Banach spaces. From the
definition it immediately follows that (closed linear) subspaces of
$p$-spaces are $p$-spaces. Hence closed linear subspaces of
$\ell_p$-direct sums of finite dimensional Banach spaces are
$p$-spaces.

\subsection{Operators on $p$-spaces}

\begin{lemma}\label{lp2222} Let $1<p\leq q<\infty$, $Y$ be a
$p$-space, $Z$ be a $q$-space, $T\in L(Y,Z)$, $\{x_n\}_{n\in\N}$ be
a sequence in $Y$ such that $\|x_n\|\to 1$ and $\|Tx_n\|\to \|T\|$
as $n\to\infty$ and $\{x_n\}$ is weakly convergent to $x\in X$. Then
$\|Tx\|=\|T\|\|x\|$.
\end{lemma}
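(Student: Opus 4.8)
The plan is to decompose each $x_n$ as its weak limit plus a weakly null tail and then play the defining asymptotic $\ell_p$-splitting of the $p$-space $Y$ against the $\ell_q$-splitting of the $q$-space $Z$. Concretely, I would set $u_n = x_n - x$, so that $u_n \to 0$ weakly in $Y$. Applying the $p$-space property (\ref{lpclass}) of $Y$ to the fixed vector $x$ and the weakly null sequence $\{u_n\}$ gives $\|x_n\| - (\|x\|^p + \|u_n\|^p)^{1/p} \to 0$; since $\|x_n\| \to 1$, this yields $\|x\|^p + \|u_n\|^p \to 1$. Writing $a = \|x\|$, nonnegativity of $\|u_n\|^p$ forces $a \leq 1$, and $\|u_n\| \to b$, where $b = (1-a^p)^{1/p}$ and $a^p + b^p = 1$.

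Next I would transport this splitting to the range. Since $T$ is bounded it is weak-to-weak continuous, so $Tx_n \to Tx$ weakly and hence $Tu_n = Tx_n - Tx \to 0$ weakly in $Z$. Applying the analogous splitting property of the $q$-space $Z$ to the fixed vector $Tx$ and the weakly null sequence $\{Tu_n\}$ gives $\|Tx_n\| - (\|Tx\|^q + \|Tu_n\|^q)^{1/q} \to 0$. Because $\|Tx_n\| \to \|T\|$, this shows, with $c = \|Tx\|$, that $\|Tx\|^q + \|Tu_n\|^q \to \|T\|^q$, i.e.\ $\|Tu_n\|^q \to \|T\|^q - c^q$ (and $c \leq \|T\|a \leq \|T\|$ makes the right-hand side nonnegative).

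Now two elementary bounds should pinch $c$. From $\|Tu_n\| \leq \|T\|\,\|u_n\|$ and $\|u_n\| \to b$ one gets $\|T\|^q - c^q \leq \|T\|^q b^q$, that is $c^q \geq \|T\|^q(1 - b^q)$; on the other hand $c = \|Tx\| \leq \|T\|\,\|x\| = \|T\|a$ gives $c^q \leq \|T\|^q a^q$. The hypothesis $p \leq q$ enters through the elementary inequality $a^q + b^q \leq a^p + b^p = 1$, valid because $a,b \in [0,1]$ and raising a number of $[0,1]$ to a larger power decreases it; equivalently $1 - b^q \geq a^q$. Chaining the two bounds, $\|T\|^q a^q \leq \|T\|^q(1 - b^q) \leq c^q \leq \|T\|^q a^q$, so every inequality is an equality and $c^q = \|T\|^q a^q$, i.e.\ $\|Tx\| = \|T\|\,\|x\|$.

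The only genuinely delicate point is recognizing that $p \leq q$ is exactly what makes the two splittings compatible: the elementary fact $a^q + b^q \leq a^p + b^p$ converts the lower bound on $c^q$ coming from the operator-norm estimate into the matching upper bound $\|T\|^q a^q$, collapsing the chain to equalities. For $p > q$ this step would reverse and the argument would break down. The remaining ingredients — weak-to-weak continuity of $T$, the two applications of the splitting property, and the nonnegativity checks — are routine.
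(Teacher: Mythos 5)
Your proof is correct and takes essentially the same route as the paper's: the same decomposition $u_n=x_n-x$, the same two applications of the $p$-space and $q$-space splitting properties to get $\|x\|^p+\|u_n\|^p\to 1$ and $\|Tx\|^q+\|Tu_n\|^q\to\|T\|^q$, and the same use of $p\leq q$ via the comparison $a^q+b^q\leq a^p+b^p$ together with $\|Tu_n\|\leq\|T\|\,\|u_n\|$. The only difference is cosmetic: the paper runs the final pinching as a proof by contradiction (assuming $\|Tx\|<\|T\|\,\|x\|$ and deriving $\|T\|<\|T\|$), whereas you chain the two-sided estimates directly to force equality.
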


\begin{proof} Let $u_n=x_n-x$. Then $\{u_n\}$ is weakly convergent
to $0$. Since $T$ is linear and bounded, $T$ is also continuous with
respect to the weak topology and therefore $\{Tu_n\}$ is weakly
convergent to 0. For brevity denote $c=\|T\|$. Since $Y$ is a
$p$-space and $Z$ is a $q$-space, we have
\begin{align}\label{a1}
1&=\lim_{n\to\infty}\|x_n\|=\lim_{n\to\infty}\|x+u_n\|=
\lim_{n\to\infty}(\|x\|^p+\|u_n\|^p)^{1/p},
\\
\label{a2}
c&=\lim_{n\to\infty}\|Tx_n\|=\lim_{n\to\infty}\|Tx+Tu_n\|=
\lim_{n\to\infty}(\|Tx\|^q+\|Tu_n\|^q)^{1/q}.
\end{align}

Clearly $\|Tu_n\|\leq c\|u_n\|$ for each $n\in\N$. Assume that
$\|Tx\|\neq c\|x\|$. Then $\|Tx\|<c\|x\|$. Using these inequalities
together with (\ref{a1}) and (\ref{a2}) and taking into account that
$p\leq q$, we obtain
\begin{equation*}
c=\lim_{n\to\infty}(\|Tx\|^q+\|Tu_n\|^q)^{1/q}<
\lim_{n\to\infty}(c^q\|x\|^q+c^q\|u_n\|^q)^{1/q}\leq c
\lim_{n\to\infty}(\|x\|^p+\|u_n\|^p)^{1/p}=c.
\end{equation*}
This contradiction proves the equality $\|Tx\|=c\|x\|$.
\end{proof}

Recall that $X$ is the $\ell_p$-direct sum of the family
$\{X_\alpha:\alpha\in \Lambda\}$ of finite dimensional Banach
spaces.

\begin{lemma}\label{lp111} Let $T\in L(X)$ be
such that
\begin{equation}\label{inf0}
\inf_{\|x\|=1}\|Tx\|=c>0.
\end{equation}
Then there exists $S\in L(X)$ such that $\|S\|=c$ and
\begin{equation}\label{inf1}
\inf_{\|x\|=1}\|(T+S)x\|=0.
\end{equation}
\end{lemma}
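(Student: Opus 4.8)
The plan is to extract a minimizing sequence for the lower bound $c$ and to use it to build the perturbation $S$. I choose unit vectors $x_n$ with $\|Tx_n\|\to c$; since $X$ is reflexive (an $\ell_p$-sum with $1<p<\infty$), I pass to a subsequence along which $\{x_n\}$ converges weakly to some $x\in X$, and I split into the cases $x\neq0$ and $x=0$. Throughout I use the pointwise lower bound $\|Ty\|\ge c\|y\|$, which is just a restatement of $(\ref{inf0})$.

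Suppose first that $x\neq0$. Then I claim $T$ attains its lower bound. Put $u_n=x_n-x$, so that $\{u_n\}$, and hence $\{Tu_n\}$, converges weakly to $0$; since $X$ is a $p$-space (Lemma~\ref{weak2}) in both the domain and the codomain, the relation $(\ref{lpclass})$ applied to $\{u_n\}$ and to $\{Tu_n\}$ gives $(\|x\|^p+\|u_n\|^p)^{1/p}\to1$ and $(\|Tx\|^p+\|Tu_n\|^p)^{1/p}\to c$. Passing to convergent subsequences of $\|u_n\|$ and $\|Tu_n\|$ and combining these two limits with the lower bounds $\|Tx\|\ge c\|x\|$ and $\|Tu_n\|\ge c\|u_n\|$ forces $\|Tx\|=c\|x\|$. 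Hence $\hat x=x/\|x\|$ is a unit vector with $\|T\hat x\|=c$, and I finish with a rank one perturbation: take $g\in X^*$ with $\|g\|=1$ and $g(\hat x)=1$ and set $Sy=-g(y)\,T\hat x$. Then $(T+S)\hat x=0$, so $T+S$ has nontrivial kernel and $(\ref{inf1})$ holds, while $\|Sy\|\le\|y\|\,\|T\hat x\|=c\|y\|$ and $\|S\hat x\|=c$ yield $\|S\|=c$.

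The case $x=0$ is the substantial one and the main obstacle; here I exploit the $\ell_p$-sum structure, not merely the $p$-space property. Now $\{x_n\}$, and hence $\{Tx_n\}$ (by weak continuity of $T$), converges weakly to $0$, and the idea is to disjointify both sequences at once. Applying Lemma~\ref{weak1} to $\{(x_n,Tx_n)\}$ in $X\oplus_p X$, which is again an $\ell_p$-direct sum of finite dimensional spaces, I obtain a subsequence and vectors $u_k,v_k\in X$ with finite supports, the sets $\supp(u_k)$ pairwise disjoint and the sets $\supp(v_k)$ pairwise disjoint, such that $\|x_{n_k}-u_k\|\to0$ and $\|Tx_{n_k}-v_k\|\to0$. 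Consequently $\|u_k\|\to1$, $\|v_k\|\to c$ and $\|Tu_k-v_k\|\to0$.

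Finally I assemble $S$ as a block operator. For each $k$ I choose $f_k\in X^*$ supported on $\supp(u_k)$ with $\|f_k\|=1$ and $f_k(u_k)=\|u_k\|$ (a norming functional on the finite dimensional coordinate block, extended by zero), and I rescale $\tilde v_k=\lambda_k v_k$ with $\lambda_k=\min\{1,c\|u_k\|/\|v_k\|\}\to1$, so that $\|\tilde v_k\|\le c\|u_k\|$. Setting $Sy=-\sum_k\|u_k\|^{-1}f_k(y)\,\tilde v_k$, the disjointness of the supports of the $\tilde v_k$ together with the $\ell_p$-additivity of the norm gives $\|Sy\|^p=\sum_k\|u_k\|^{-p}|f_k(y)|^p\|\tilde v_k\|^p\le c^p\sum_k|f_k(y)|^p\le c^p\|y\|^p$, since $|f_k(y)|\le\|P_{\supp(u_k)}y\|$ and the $\supp(u_k)$ are disjoint; thus $\|S\|\le c$. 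Disjointness of the supports of the $u_k$ also gives $f_k(u_j)=0$ for $j\neq k$, so $Su_k=-\tilde v_k$, whence $\|Su_k\|/\|u_k\|\to c$ and therefore $\|S\|=c$. Since $(T+S)u_k=Tu_k-\tilde v_k\to0$ while $\|u_k\|\to1$, normalizing the $u_k$ gives unit vectors on which $T+S$ tends to $0$, which is exactly $(\ref{inf1})$. The two delicate points are the simultaneous disjointification and the verification that the rescaling keeps $\|S\|$ equal to $c$ rather than merely at most $c$.
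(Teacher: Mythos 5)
Your proof is correct, and its skeleton coincides with the paper's: a minimizing sequence with weak limit $x$, the dichotomy $x\neq 0$ versus $x=0$, a kernel-creating rank-one perturbation in the first case, and disjointly supported rank-one blocks in the second. You deviate in two places, one cosmetic and one substantive. Cosmetic: for $x=0$ the paper disjointifies sequentially, applying Lemma~\ref{weak1} first to $\{x_n\}$ (producing $\{y_k\}$) and then again to $\{Ty_k\}$ (producing $\{w_m\}$), whereas your single application to $\{(x_n,Tx_n)\}$ in $X\oplus_p X$ --- itself an $\ell_p$-direct sum of finite dimensional spaces, in which weak convergence is componentwise --- achieves the same simultaneous disjointification in one pass; likewise your capped rescaling $\lambda_k=\min\{1,c\|u_k\|/\|v_k\|\}$ plays exactly the role of the paper's normalization $c\|y_{k_m}\|/\|w_m\|$, and both devices yield $\|S\|=c$ together with $\|(T+S)u_k\|\to 0$. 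Substantive: for $x\neq 0$ the paper takes $Y=T(X)$, observes that $Y$ is closed and $T:X\to Y$ invertible, and applies Lemma~\ref{lp2222} to the inverse $R$, which requires Proposition~\ref{uvuv} to know that $Y$ is a $p$-space; you obtain the key identity $\|Tx\|=c\|x\|$ directly by applying (\ref{lpclass}) twice, to $\{u_n\}$ and to $\{Tu_n\}$, and squeezing with the pointwise lower bound $\|Ty\|\geq c\|y\|$ coming from (\ref{inf0}). This is the mirror image of the computation inside Lemma~\ref{lp2222} (lower bound in place of upper bound), but it bypasses the closed-range/inverse detour and the appeal to Proposition~\ref{uvuv} altogether --- a genuine, if modest, simplification. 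One pedantic point you share with the paper and might make explicit: finitely many $u_k$ (resp.\ $v_k$) could vanish, so one should discard an initial segment before dividing by $\|u_k\|$ (resp.\ forming $\lambda_k$); this is legitimate since $\|u_k\|\to 1$ and $\|v_k\|\to c>0$.
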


\begin{proof} Pick a sequence $\{x_n\}_{n\in \N}$ in $X$ such that
$\|x_n\|\to 1$ and $\|Tx_n\|\to c$ as $n\to\infty$. Since $X$ is
reflexive, we can choose such a sequence $\{x_n\}$ being weakly
convergent to $x\in X$. Clearly $\|x\|\leq 1$.

{\bf Case} \ $x=0$. That is, $\{x_n\}$ weakly converges to $0$. By
Lemma~\ref{weak1}, we can find a strictly increasing sequence
$\{n_k\}_{k\in\N}$ of positive integers and a sequence
$\{y_k\}_{k\in\N}$ of elements of $X_0$ such that the supports of
$y_k$ are pairwise disjoint and
\begin{equation}\label{yk}
\text{$\|x_{n_k}-y_k\|<2^{-k}$ for any $k\in\N$.}
\end{equation}
Since the sequence $\{x_{n_k}\}$ weakly converges to 0, formula
(\ref{yk}) implies that $\{y_k\}$ also weakly converges to 0. Since
$T\in L(X)$, the sequence $\{Ty_k\}$ weakly converges to 0. Using
Lemma~\ref{weak1} once again, we see that there exist a strictly
increasing sequence $\{k_m\}_{m\in\N}$ of positive integers and a
sequence $\{w_m\}_{m\in\N}$ in $X_0$ such that the supports of $w_m$
are pairwise disjoint and
\begin{equation}\label{wm}
\text{$\|Ty_{k_m}-w_m\|<2^{-m}$ for any $m\in\N$.}
\end{equation}
From (\ref{yk}) it follows that $\|Tx_{n_k}-Ty_k\|\leq 2^{-k}\|T\|$
for any $k\in\N$. Since $\|Tx_{n_k}\|\to c$ as $k\to\infty$, we have
$\|Ty_k\|\to c$ as $k\to\infty$. Now by (\ref{wm}) and (\ref{yk}) we
obtain
\begin{equation}\label{lino}
\lim_{m\to\infty}\|y_{k_m}\|=1\ \ \ \text{and}\ \ \
\lim_{m\to\infty}\|w_m\|=c.
\end{equation}

For each $m\in\N$ let $A_m=\supp(y_{k_m})$, $P_m=P_{A_m}$ and
$X_m=P_m(X)$. By Hahn--Banach theorem, for any $m\in\N$, we can find
$\phi_m\in X_m^*$ such that $\|\phi_m\|=1$ and
$\phi_m(y_{k_m})=\|y_{k_m}\|$. Consider the operator $S\in L(X)$
defined by the formula
$$
Su=-c\sum_{m=1}^\infty \frac{\phi_m(P_mu)}{\|w_m\|}w_m.
$$
From the equalities $\|\phi_m\|=1$, pairwise disjointness of $A_m$
and pairwise disjointness of $\supp(w_m)$ it immediately follows
that $\|S\|=c$. On the other hand, by the definition of $S$
$$
Sy_{k_m}=-\frac{c\|y_{k_m}\|}{\|w_m\|}w_m\ \ \ \text{for any
$m\in\N$}.
$$
According to (\ref{lino}) we have $\|Sy_{k_m}+w_m\|\to 0$ as
$m\to\infty$. Thus by (\ref{wm}), $\|(T+S)y_{k_m}\|\to 0$ as
$m\to\infty$. From (\ref{lino}) it follows that $\|y_{k_m}\|\to 1$
as $m\to\infty$. Hence (\ref{inf1}) is satisfied.

{\bf Case} $x\neq 0$. Let $Y=T(X)$. According to (\ref{inf0}), $Y$
is a closed linear subspace of $X$ and $T:X\to Y$ is invertible.
Consider $R\in L(Y,X)$ being the inverse of $T:X\to Y$. From
(\ref{inf0}) it follows that $\|R\|=c^{-1}$. It is also clear that
the sequence $u_n=c^{-1}Tx_n$ is weakly convergent to $c^{-1}Tx$ and
$\|u_n\|\to 1$ as $n\to\infty$. Moreover $Ru_n=c^{-1}x_n$ for any
$n\in\N$ and therefore $Ru_n$ weakly converges to $c^{-1}x$ and
$\|Ru_n\|\to c^{-1}=\|R\|$  as $n\to\infty$. By
Proposition~\ref{uvuv}, $X$ and $Y$ are $p$-spaces. Hence, according
to Lemma~\ref{lp2222}, $\|R\|\|c^{-1}Tx\|=\|c^{-1}RTx\|$. Taking
into account that $RTx=x$ and $\|R\|=c^{-1}$, we have
$\|Tx\|=c\|x\|$. By Hahn--Banach theorem, we can find $\phi\in X^*$
such that $\|\phi\|=1$ and $\phi(x)=\|x\|$. Let now $S\in L(X)$,
$Su=-\|x\|^{-1}\phi(u)Tx$. Since $\|Tx\|=c\|x\|$, we have $\|S\|\leq
c$. Moreover $(T+S)x=Tx-Tx=0$ and therefore $T+S$ has non-trivial
kernel. Hence (\ref{inf1}) is satisfied.
\end{proof}

\subsection{Proof of Theorem~\ref{lp1}}

Let $T\in L(X)$, $\epsilon>0$ and $z\in\Sigma_\epsilon(T)$. In view
of (\ref{ps5}), it suffices to show that $z\in\Sigma^0_\epsilon(T)$.
Since $z\in\Sigma_\epsilon(T)$, we have $\|(T-zI)^{-1}\|\geq
\epsilon^{-1}$. If $\|(T-zI)^{-1}\|>\epsilon^{-1}$, the inclusion
$z\in\Sigma^0_\epsilon(T)$ follows from (\ref{ps3}). It remains to
consider the case $\|(T-zI)^{-1}\|=\epsilon^{-1}$. In this case
$$
\epsilon=\inf_{\|x\|=1}\|(T-zI)x\|.
$$
By Lemma~\ref{lp111}, we can find $S\in L(X)$ such that
$\|S\|\leq\epsilon$ and
$$
0=\inf_{\|x\|=1}\|(T-zI+S)x\|.
$$
The last display implies that $T+S-zI$ is not invertible. Hence
$z\in\sigma(T+S)$. Since $\|S\|\leq \epsilon$, we obtain the
required inclusion $z\in\Sigma^0_\epsilon(T)$.

\subsection{Proof of Theorem~\ref{lp2}}

\begin{lemma}\label{xnz} Let $Y$ and $Z$ be Banach spaces, $J\in
L(Y,Z)$ and $T\in\kk(Y,Z)$ be such that $\|J+T\|>\|J\|$. Assume also
that $\{x_n\}_{n\in\N}$ is a sequence of vectors in $Y$ weakly
convergent to $x\in Y$ for which $\|x_n\|\to 1$ and
$\|(J+T)x_n\|\to\|J+T\|$ as $n\to\infty$. Then $x\neq 0$.
\end{lemma}

\begin{proof} Since $T$ is compact, $\|Tx_n-Tx\|\to 0$ as
$n\to\infty$. Hence $\lim\limits_{n\to\infty}\|Jx_n+Tx\|=\|J+T\|$.
On the other hand, $\slim\limits_{n\to\infty}\|Jx_n\|\leq \|J\|$.
Thus using the triangle inequality, we obtain
$\|Tx\|\geq\|J+T\|-\|J\|$. It follows that $\|x\|\|T\|\geq
\|Tx\|\geq \|J+T\|-\|J\|>0$. Hence, $x\neq 0$. \end{proof}

We are ready to prove Theorem~\ref{lp2}. Pick a sequence
$\{x_n\}_{n\in\N}$ of elements of $X$ such that $\|x_n\|=1$ for any
$n\in\N$ and $\|(J+T)x_n\|\to \|J+T\|$ as $n\to\infty$. Since $X$ is
reflexive, we, passing to a subsequence, if necessary, may assume
that $\{x_n\}$ weakly converges to $x\in X$. By Lemma~\ref{xnz}
$x\neq 0$. According to Lemma~\ref{lp2222},
$\|Jx+Tx\|=\|x\|\|J+T\|$. Hence $J+T$ attains its norm on the vector
$x/\|x\|$.

\section{Operators on $\K\oplus_q\ell_p$}

We start by a series of elementary observations.

\begin{lemma} \label{du1} Let $X$ and $Y$ be Banach spaces and $T\in
L(X,Y)$ be an operator attaining its norm. Then the dual operator
$T^*\in L(Y^*,X^*)$ attains its norm.
\end{lemma}

\begin{proof} Since $T$ attains its norm, there exists $x\in X$
such that $\|x\|=1$ and $\|Tx\|=\|T\|$. By Hahn--Banach theorem, we
can pick $\phi\in Y^*$ such that $\|\phi\|=1$ and $\phi(Tx)=\|Tx\|$.
Since $\phi(Tx)=(T^*\phi)(x)$, we have $\phi(Tx)\leq
\|T^*\phi\|\|x\|=\|T^*\phi\|$. Since $\|Tx\|=\|T\|=\|T^*\|$, we see
that $\|T^*\phi\|\geq \|T^*\|$ and $\|\phi\|=1$. Thus
$\|T^*\phi\|=\|T^*\|$ and therefore $T^*$ attains its norm at
$\phi$.
\end{proof}

The above lemma immediately implies the following corollary.

\begin{corollary}\label{du2} Let $X$ be a reflexive Banach space and
$T\in L(X)$. Then $T$ attains its norm if and only if $T^*$ attains
its norm.
\end{corollary}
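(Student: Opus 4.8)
The plan is to derive both implications from Lemma~\ref{du1}, using the reflexivity of $X$ to close the loop, so that no fresh argument is needed beyond two applications of the lemma and a standard duality identification.

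The forward implication is immediate: since $T\in L(X)=L(X,X)$, applying Lemma~\ref{du1} with $Y=X$ shows that if $T$ attains its norm, then $T^*\in L(X^*)$ attains its norm. For the reverse implication, I would apply Lemma~\ref{du1} a second time, now with $T^*$ in the role of the operator and $(X^*,X^*)$ in the role of $(X,Y)$. This yields: if $T^*$ attains its norm, then the bidual operator $T^{**}\in L(X^{**})$ attains its norm. The remaining step is to transport this conclusion back to $T$ via reflexivity. Under the canonical isometric embedding $\iota\colon X\to X^{**}$ one has the naturality relation $T^{**}\circ\iota=\iota\circ T$, so $T$ and $T^{**}$ are conjugate through an isometric isomorphism. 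In particular $\|T^{**}\|=\|T\|$, and if $T^{**}$ attains its norm at some $\xi\in X^{**}$ with $\|\xi\|=1$, then writing $\xi=\iota(x)$ with $\|x\|=1$ (possible since $\iota$ is onto) gives $\|Tx\|=\|\iota(Tx)\|=\|T^{**}\iota(x)\|=\|T^{**}\|=\|T\|$, whence $T$ attains its norm at $x$.

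I do not expect a serious obstacle here: the only points needing care are the naturality identity $T^{**}\circ\iota=\iota\circ T$ relating an operator, its bidual, and the canonical embedding, and the fact that $\iota$ is a surjective isometry precisely because $X$ is reflexive. Both are entirely standard, and it is exactly surjectivity of $\iota$ that is used to pull the norm-attaining vector $\xi$ back into $X$. Thus the corollary follows by invoking Lemma~\ref{du1} twice and identifying $T^{**}$ with $T$.
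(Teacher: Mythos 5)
Your proof is correct and is precisely the argument the paper intends: the paper states that Corollary~\ref{du2} follows immediately from Lemma~\ref{du1}, and your two applications of the lemma (to $T$ and to $T^*$), combined with the identification of $T^{**}$ with $T$ via the naturality relation $T^{**}\circ\iota=\iota\circ T$ and the surjectivity of the canonical embedding under reflexivity, is exactly how that immediate implication works. The only difference is that you spell out the standard duality details that the paper leaves implicit.
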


In particular, using the facts that an operator is compact if and
only if its dual is compact and an operator is invertible if and
only if its dual is invertible, we have the following result.

\begin{corollary}\label{du3} Let $X$ be a reflexive Banach space. Then $X\in\cla$
if  and only if $X^*\in \cla$. Moreover, the following two
statements are equivalent:
\begin{itemize}
\item[\rm(\ref{du3}.1)] there is a compact operator $T$ on $X$ such
that $I+T$ is invertible, $\|I+T\|>1$ and $I+T$ does not attain its
norm;
\item[\rm(\ref{du3}.2)] there is a compact operator $S$ on $X^*$ such
that $I+S$ is invertible, $\|I+S\|>1$ and $I+S$ does not attain its
norm.
\end{itemize}
\end{corollary}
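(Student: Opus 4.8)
The plan is to transport operators across the duality map $T\mapsto T^*$ and invoke Corollary~\ref{du2} to handle the norm-attaining clause. Two elementary identities underlie everything: first, $(I+T)^*=I+T^*$, because the adjoint of the identity on $X$ is the identity on $X^*$; and second, $\|I+T\|=\|(I+T)^*\|$, valid for every bounded operator. I will also use the standard facts, recalled just before the corollary, that $T$ is compact iff $T^*$ is and that $I+T$ is invertible iff $(I+T)^*$ is. Since $X$ is reflexive, so is $X^*$, and under the canonical identification $X^{**}=X$ we have $T^{**}=T$, so $T\mapsto T^*$ is a bijection between operators on $X$ and operators on $X^*$.

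To prove $X\in\cla\Rightarrow X^*\in\cla$, take an arbitrary $S\in\kk(X^*)$ with $\|I+S\|>1$ and set $T=S^*$, viewed as an operator on $X^{**}=X$. Then $T\in\kk(X)$ and, since $I+T=(I+S)^*$, we get $\|I+T\|=\|I+S\|>1$, so $I+T$ attains its norm by hypothesis. As $T^*=S$ under the identification, Corollary~\ref{du2} applied to $I+T$ on the reflexive space $X$ gives that $(I+T)^*=I+S$ attains its norm as well. Since $S$ was arbitrary, $X^*\in\cla$. The reverse implication is identical with the roles of $X$ and $X^*$ interchanged, using that $X^*$ is reflexive with $(X^*)^*=X$; this establishes the first assertion.

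The equivalence of $(\ref{du3}.1)$ and $(\ref{du3}.2)$ follows from the very same correspondence, now carrying along the invertibility and non-attainment clauses. Given a witness $T$ for $(\ref{du3}.1)$, the operator $S=T^*\in\kk(X^*)$ satisfies $I+S=(I+T)^*$, so $I+S$ is invertible (dual of an invertible operator), $\|I+S\|=\|I+T\|>1$, and by Corollary~\ref{du2} the failure of $I+T$ to attain its norm forces the same for $I+S$; thus $S$ witnesses $(\ref{du3}.2)$, and the converse again follows by interchanging $X$ and $X^*$. The only point requiring any care is the bookkeeping of the identification $X^{**}=X$ together with the identities $(I+T)^*=I+T^*$ and $\|(I+T)^*\|=\|I+T\|$; once these are in place there is no analytic content left, which is exactly why the statement is an immediate corollary of Lemma~\ref{du1}.
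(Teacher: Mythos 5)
Your proof is correct and follows exactly the route the paper intends: the paper derives Corollary~\ref{du3} in one line from Corollary~\ref{du2} together with the facts that compactness and invertibility pass to adjoints, and your argument is just a careful spelling-out of that same correspondence $T\mapsto T^*$ under the identification $X^{**}=X$. The bookkeeping you flag (reflexivity of $X^*$, $S^{**}=S$, $(I+T)^*=I+T^*$) is handled properly, so there is nothing to add.
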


\subsection{Proof of Theorem~\ref{simple}}

Let $1<p<\infty$, $1\leq q\leq\infty$, $p\neq q$ and
$X=\K\oplus_q\ell_p$. Clearly $X$ is reflexive and $X^*$ is
naturally isometrically isomorphic to $\K\oplus_{q'}\ell_{p'}$,
where $\frac1p+\frac1{p'}=\frac1q+\frac1{q'}=1$. It is also easy to
see that $p'>q'$ if $p<q$ and $p'<q'$ if $p>q$. According to
Corollary~\ref{du3}, it is enough to prove Theorem~\ref{simple} in
the case $p<q$. Thus from now on, we assume that $p<q$.

We naturally interpret $X$ as a space of sequences $x=\{x_n\}_{n\geq
0}$, where $x_0$ and $\{x_n\}_{n\in\N}$ correspond to the
$\K$-component and the $\ell_p$-component in the decomposition
$X=\K\oplus_q\ell_p$ respectively. For any $x\in X$, we denote
\begin{equation}\label{abg}
\alpha(x)=|x_0|,\quad \beta(x)=|x_1| \quad\text{and}\quad
\gamma(x)=\biggl(\sum_{n=2}^\infty|x_n|^p\biggr)^{1/p}.
\end{equation}
Clearly, for $x\in X$,
\begin{align}
\|x\|&=f(\alpha(x),\beta(x),\gamma(x)),\ \ \text{where} \label{no1}
\\
f(\alpha,\beta,\gamma)&=\left\{\begin{array}{ll}
\bigl(\alpha^q+(\beta^p+\gamma^p)^{q/p}\bigr)^{1/q}&\text{if
$q<\infty$,}\\
\max\bigl\{\alpha,(\beta^p+\gamma^p)^{1/p}\bigr\}&\text{if
$q=\infty$.}\end{array}\right. \label{no2}
\end{align}
Consider the operator $S\in L(X)$ defined by the formula
$(Sx)_0=x_1$, $(Sx)_1=x_0$ and $(Sx)_n=\frac{nx_n}{n+1}$ if $n\geq
2$. That is,
$$
Sx=\Bigl(x_1,x_0,\frac{2x_2}3,\frac{3x_3}{4},\dots\Bigr).
$$
Clearly $T=S-I$ is compact. Thus in order to verify that $T$
satisfies the required conditions, it suffices to show that $S=I+T$
is invertible, $\|S\|>1$ and $S$ does not attain its norm.
Invertibility of $S$ is obvious. Indeed, the operator $R\in L(X)$
defined as $Rx=\bigl(x_1,x_0,3x_2/2,4x_3/3,\dots\bigr)$ is the
inverse of $S$. Next, let $x\in X$ be such that $\|x\|=1$ and let
$\alpha(x)$, $\beta(x)$ and $\gamma(x)$ be the numbers defined in
(\ref{abg}). It is clear that $\alpha(Sx)=\beta(x)$,
$\beta(Sx)=\alpha(x)$, $\gamma(Sx)\leq\gamma(x)$. Moreover,
$\gamma(Sx)<\gamma(x)$ if $\gamma(x)>0$. Thus according to
(\ref{no1}),
\begin{equation}\label{C1}
f(\alpha(x),\beta(x),\gamma(x))=1\ \ \text{and}\ \
\|Sx\|=f(\beta(x),\alpha(x),\gamma(Sx))\leq
f(\beta(x),\alpha(x),\gamma(x)).
\end{equation}
Moreover, since $\gamma(Sx)<\gamma(x)$ when $\gamma(x)>0$, we have
\begin{equation}\label{C2}
\begin{array}{rl}\|Sx\|<f(\beta(x),\alpha(x),\gamma(x))&\text{if $q<\infty$
and $\gamma(x)>0$}\\ \text{and}&\text{if $q=\infty$, $\gamma(x)>0$
and $\beta(x)<(\alpha(x)^p+\gamma(x)^p)^{1/p}$}.\end{array}
\end{equation}
According to (\ref{C1}), $\|Sx\|\leq C$, where
$$
C=\sup\{f(\beta,\alpha,\gamma):(\alpha,\beta,\gamma)\in K\}\ \
\text{and}\ \ K=\{(\alpha,\beta,\gamma)\in\R^3_+:\
f(\alpha,\beta,\gamma)=1\}.
$$
Since $K$ is compact and $f$ is continuous, the supremum in the
definition of $C$ is attained. Using, for instance, the Lagrange
multipliers technique, one can easily see that the function
$(\alpha,\beta,\gamma)\mapsto f(\beta,\alpha,\gamma)$ from $K$ to
$\R_+$ attains its maximal value $C=2^{\frac1p-\frac1q}$ in exactly
one point being $(2^{-1/q},0,2^{-1/q})$. From (\ref{C2}) it now
follows that
\begin{equation}\label{C3}
\|Sx\|<C=2^{\frac1p-\frac1q}\ \ \text{whenever $\|x\|=1$.}
\end{equation}
Now consider the sequence $x_n=2^{-1/q}e_0+2^{-1/q}e_n$, $n\in\N$,
where $\{e_k\}$  is the canonical basis in the sequence space $X$.
Clearly $\|x_n\|=1$ for each $n\in\N$. On the other hand, for any
$n\geq 2$, $Sx_n=2^{-1/q}\Bigl(e_1+\frac{n}{n+1}e_n\Bigr)$ and
therefore
\begin{equation}\label{C4}
\|Sx_n\|=2^{-1/q}\biggl(1+\Bigl(\frac{n}{n+1}\Bigr)^p\biggr)^{1/p}\to
2^{\frac1p-\frac1q}\ \ \text{as}\ \ n\to\infty.
\end{equation}
From (\ref{C3}) and (\ref{C4}) it follows that
$\|S\|=2^{\frac1p-\frac1q}>1$ and the norm of $S$ is not attained.
The proof of Theorem~\ref{simple} is now complete.

\section{Proper extensions of Hilbert spaces and finite rank
operators}

In this section we prove a theorem slightly stronger then
Proposition~\ref{simple1}. We need some preparation. Throughout this
section $\H$ is a Hilbert space and $n\in\N$. We say that
$X=\K^n\times \H$ is a {\it proper extension} of $\H$ if $X$ is
endowed with a norm such that
\begin{equation}\label{phi}
\|(t,x)\|=\phi(t,\|x\|)\quad\text{for any $t\in\K^n$, $x\in\H$},
\end{equation}
where $\phi:\K^n\times\R_+\to\R_+$ is a function and $\phi(0,1)=1$.
The fact that $(t,x)\mapsto\phi(t,\|x\|)$ is a norm on $X$ implies
immediately that $\phi$ is Lipschitzian, convex, $\phi(t,a)>0$,
whenever $(t,a)\neq (0,0)$ and $\phi(st,sa)=s\phi(t,a)$ for any
$s,a\in\R_+$ and $t\in\K^n$. The normalization condition
$\phi(0,1)=1$ implies that $\|(0,x)\|=\ssub{\|x\|}{\H}$ for any
$x\in \H$. Thus $\H$ is naturally isometrically embedded into $X$.
Since $\H$ has finite codimension in $X$, we see that $X$ is a
Banach space and admits an equivalent norm which turns it into a
Hilbert space. In particular, $X$ is reflexive.

\begin{theorem}\label{simple2} Let $X=\K^n\times \H$ be a proper
extension of a Hilbert space $\H$. Then for any bounded finite rank
operator $T$ on $X$, $I+T$ attains its norm.
\end{theorem}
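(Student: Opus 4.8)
The plan is to run a weak-compactness argument on a norm-attaining sequence and then repair the defect of weak semicontinuity using the Hilbert geometry of $\H$ together with the finite rank of $T$. First I would dispose of the trivial case: if $\H$ is finite dimensional, then so is $X$, its closed unit ball is compact, and the continuous map $x\mapsto\|(I+T)x\|$ attains its maximum. So assume $\H$ is infinite dimensional and put $c=\|I+T\|$. Pick $x_m\in X$ with $\|x_m\|=1$ and $\|(I+T)x_m\|\to c$; since $X$ is reflexive I pass to a subsequence with $x_m\rightharpoonup x$. Writing $x_m=(t_m,\xi_m)$ and $x=(t,\xi)$, the coordinate projections are weakly continuous, so $t_m\to t$ in $\K^n$ (weak and norm convergence agree in finite dimensions) and $\xi_m\rightharpoonup\xi$ in $\H$. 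As $T$ is finite rank, hence compact, $T(x_m-x)\to0$ in norm. Setting $u_m=\xi_m-\xi$ and using that $\|x_m\|=1$ forces $\|\xi_m\|$ to stay bounded (by homogeneity $\phi(t_m,r)\sim r$ as $r\to\infty$), I pass to a further subsequence so that $\|u_m\|\to b\geq0$.

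The heart of the computation is the Pythagorean identity $\|\eta+u_m\|^2=\|\eta\|^2+\|u_m\|^2+o(1)$ for fixed $\eta\in\H$ and weakly null $\{u_m\}$, the cross term vanishing because $u_m\rightharpoonup0$. Feeding this into the (Lipschitz, hence continuous) function $\phi$, whose first slot receives the convergent $t_m\to t$, I would get
\[
\|x_m\|\longrightarrow\phi\bigl(t,\sqrt{\|\xi\|^2+b^2}\bigr)
\qquad\text{and}\qquad
\|(I+T)x_m\|\longrightarrow\phi\bigl(\sigma,\sqrt{\|\omega\|^2+b^2}\bigr),
\]
where $(\sigma,\omega):=(I+T)x$; for the second limit one writes $(I+T)x_m=(I+T)x+(x_m-x)+T(x_m-x)$ and notes that, in the $\H$-coordinate, everything converges in norm except the weakly null $u_m$, which contributes the length $b$ orthogonally. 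Since $\|x_m\|=1$ and $\|(I+T)x_m\|\to c$, these read
\[
\phi\bigl(t,\sqrt{\|\xi\|^2+b^2}\bigr)=1
\qquad\text{and}\qquad
\phi\bigl(\sigma,\sqrt{\|\omega\|^2+b^2}\bigr)=c .
\]

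It remains to realise these two limiting values at an honest unit vector. The idea is that the escaped length $b$ is carried by a weakly null sequence that $I+T$ treats like the identity in the limit, so I freeze it into one fixed orthogonal kernel vector. The map $h\mapsto T(0,h)$ from $\H$ to $X$ has finite rank, so its kernel $N$ has finite codimension; since $\xi,\omega$ span a subspace of dimension at most two, $W:=N\cap\{\xi\}^\perp\cap\{\omega\}^\perp$ still has finite codimension in the infinite dimensional $\H$, hence contains some $h$ with $\|h\|=b$. Put $x^*=(t,\xi+h)$. Orthogonality $h\perp\xi$ gives $\|\xi+h\|=\sqrt{\|\xi\|^2+b^2}$, so $\|x^*\|=\phi\bigl(t,\sqrt{\|\xi\|^2+b^2}\bigr)=1$ by the first identity; and $h\in N$ gives $T(0,h)=0$, so $(I+T)x^*=(\sigma,\omega+h)$, whence $h\perp\omega$ and the second identity yield $\|(I+T)x^*\|=\phi\bigl(\sigma,\sqrt{\|\omega\|^2+b^2}\bigr)=c$. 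Thus $I+T$ attains its norm at $x^*$.

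The main obstacle is exactly the failure of weak semicontinuity: along the maximising sequence a definite amount of norm, namely $b$, can leak into the weakly null tail, so the plain weak limit $x$ need not be a maximiser. What rescues the argument is the simultaneous use of two structural facts, that the finite rank of $T$ leaves an infinite dimensional subspace of $\H$ on which $I+T$ acts as the identity, and that the Hilbert norm is exactly additive under orthogonal sums, so the lost length $b$ can be reinserted as a single orthogonal kernel vector without perturbing any limiting norm. The finitely many linear side conditions ($h\perp\xi$, $h\perp\omega$, $h\in N$) cost nothing because $\H$ is infinite dimensional. I expect the only genuinely fiddly step to be the rigorous passage to the two limit identities inside the nonlinear $\phi$, i.e. controlling the cross terms $\langle\eta,u_m\rangle$ and the norm errors from $T(x_m-x)\to0$; the Lipschitz bound on $\phi$ reduces this to routine estimates.
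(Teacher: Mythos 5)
Your proposal is correct and takes essentially the same approach as the paper's proof: the same weak-limit decomposition of the maximising sequence, the same use of the Hilbert ``2-space'' (Pythagorean) identity $\|\eta+u_m\|^2=\|\eta\|^2+\|u_m\|^2+o(1)$ fed through the continuous $\phi$, and the same reinsertion of the escaped mass $b$ as a single vector of norm $b$ in the finite-codimensional subspace of $\H$ annihilated by $h\mapsto T(0,h)$ and orthogonal to the relevant limit vectors. The only cosmetic difference is that you orthogonalize $h$ against $\xi$ and $\omega=\xi+y$ while the paper uses $\xi$ and $y$, which is the same condition.
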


\begin{proof} Let $\phi:\K^n\times\R_+\to\R_+$ be a function
defining the norm on $X$ according to (\ref{phi}). If $\H$ is finite
dimensional, the result becomes trivial. Thus we can assume that
$\H$ is infinite dimensional. Pick a sequence
$\{\xi_k=(t_k,x_k)\}_{k\in\N}$ of elements of $X$ such that
$\|\xi_k\|\to 1$ and $\|(I+T)\xi_k\|\to c$ as $k\to\infty$. Since
$X$ is reflexive, we can, passing to a subsequence, if necessary,
assume that $\{\xi_k\}$ converges weakly to $\xi=(t,x)\in X$. Since
$T$ has finite rank, $\{T\xi_k\}$ is norm convergent to $T\xi=(s,y
)$. Next, since weak and norm convergences on a finite dimensional
Banach space coincide, we see that $\{t_k\}$ converges to $t$ in
$\C^n$. Passing to a subsequence again, if necessary, we can assume
that $\|x_k-x\|\to \alpha\in\R_+$.

Since $x_k-x$ is weakly convergent to zero in the Hilbert space $\H$
and any Hilbert space is a 2-space, we see that
\begin{equation}\label{012}
\lim_{k\to\infty}\|x_k\|=(\|x\|^2+\alpha^2)^{1/2}\ \ \ \text{and}\ \
\ \lim_{k\to\infty}\|x_k+y\|=(\|x+y\|^2+\alpha^2)^{1/2}.
\end{equation}
Since $\|\xi_k\|\to 1$, $\|(I+T)\xi_k\|\to c$, $t_k\to t$ and
$\|T\xi_k-(s,y)\|\to 0$, we have
\begin{align*}
1&=\lim_{k\to\infty}\|\xi_k\|=\lim_{k\to\infty}\|(t_k,x_k)\|=\lim_{k\to\infty}\|(t,x_k)\|,
\\
c&=\lim_{k\to\infty}\|(I+T)\xi_k\|=\lim_{k\to\infty}\|\xi_k+(s,y)\|=
\lim_{k\to\infty}\|(t_k+s,x_k+y)\|=\lim_{k\to\infty}\|(t+s,x_k+y)\|.
\end{align*}
Using (\ref{phi}), (\ref{012}) and continuity of $\phi$, we obtain
\begin{equation}\label{eq1}
\phi(t,(\|x\|^2+\alpha^2)^{1/2})=1\ \ \text{and}\ \
\phi(t+s,(\|x+y\|^2+\alpha^2)^{1/2})=c.
\end{equation}

Since $\H$ is infinite dimensional and $T$ has finite rank, the
linear subspace
$$
L=\{v\in H:T(0,v)=0,\ \langle u,x\rangle=\langle u,y\rangle=0\}
$$
has finite codimension and therefore is non-trivial. Hence we can
pick $u\in L$ such that $\|u\|=\alpha$. Since $u$ is orthogonal to
both $x$ and $y$, we see that $\|x+u\|=(\|x\|^2+\alpha^2)^{1/2}$ and
$\|x+y+u\|=(\|x+y\|^2+\alpha^2)^{1/2}$. Hence, according to
(\ref{phi}) and (\ref{eq1})
$$
\|(t,x+u)\|=\phi(t,\|x+u\|)=1\ \ \ \text{and}\ \ \
\|(t+s,x+y+u)\|=\phi(t,\|x+y+u\|)=c.
$$
Finally, since $T(0,u)=0$, we have $T(t,x+u)=T(t,x)=(s,y)$. Hence
$$
(I+T)(t,x+u)=(t+s,x+y+u).
$$
Since $c=\|I+T\|$, from the last two displays it follows that $I+T$
attains its norm on the vector $(t,x+u)$.
\end{proof}

Theorem~\ref{simple1} follows from Theorem~\ref{simple2} since
$Y\oplus_p\ell_2$ for a finite dimensional Banach space $Y$ is a
particular case of a proper extension.

\section{Examples with rank one operators}

As was already mentioned in the introduction, Shargorodsky
\cite{Sha2} constructed $T\in\kk(\ell_1)$ such that (\ref{ps4})
fails for $T$ for one prescribed $\epsilon>0$. We shall demonstrate
that for $X=\ell_1$ and $X=c_0$ one can find a rank 1 operator $T$
for which (\ref{ps4}) fails for any $\epsilon>0$. As usual, we
denote the canonical basis in $c_0$ or $\ell_1$ by
$\{e_n\}_{n\geq0}$.

\begin{example}\label{exc0} Let $T\in L(c_0)$,
$$
Tx=\biggl(\sum_{n=1}^\infty 2^{-n}x_n\biggr)e_0.
$$
Then $T$ has rank $1$, $T^2=0$ and for any $z\in\K$,
$\|T+zI\|=1+|z|$ and the operator $T+zI$ does not attain its norm.
\end{example}

\begin{proof} Obviously, $T$ has rank 1, $\|T\|=1$ and $T^2=0$. Let
$z\in \K$ and $r=|z|$. Since $\|T\|=1$, we have $\|T+zI\|\leq 1+r$.
For $n\in\N$, consider $x_n=(r/z)e_0+e_1+e_2+{\dots}+e_n$. Clearly
$\|x_n\|=1$ and the $e_0$-coefficient of $(T+zI)x_n$ equals
$r+1-2^{-n}$. Hence $\|T+zI\|\geq r+1-2^{-n}$ for any $n\in\N$. Thus
$\|T+zI\|\geq 1+r$. Since the opposite inequality is also true,
$\|T+zI\|=1+r$. It remains to show that $T+zI$ does not attain its
norm. Assume the contrary. Then there exists $x\in c_0$ such that
$\|x\|=1$ and $\|y\|=1+r$, where $y=zx+Tx$. Since $Tx$ is a scalar
multiple of $e_0$, we have $y_n=zx_n$ for $n\in\N$. Hence $|y_n|\leq
r$ for $n\in\N$. Thus $1+r=\|y\|=|y_0|$. Using the definition of $T$
we obtain $y_0=zx_0+\sum\limits_{n=1}^\infty 2^{-n}x_n$. Hence
$$
1+r=|y_0|\leq |z||x_0|+\sum_{n=1}^\infty 2^{-n}|x_n|\leq
r+\sum_{n=1}^\infty 2^{-n}=1+r.
$$
The latter is possible only if $|x_j|=1$ for any $j$ which
contradicts the inclusion $x\in c_0$.
\end{proof}

\begin{example}\label{exl1} Let $T\in L(\ell_1)$,
$$
Tx=\biggl(\sum_{n=1}^\infty (1-2^{-n})x_n\biggr)e_0.
$$
Then $T$ has rank $1$, $T^2=0$ and for any $z\in\C$,
$\|T+zI\|=1+|z|$ and the operator $T+zI$ does not attain its norm.
\end{example}

\begin{proof} Obviously, $T$ has rank 1, $\|T\|=1$ and $T^2=0$. Let
$z\in \K$ and $r=|z|$. For $n\in\N$, we have
$(T+zI)e_n=(1-2^n)e_0+ze_n$. Hence $\|(T+zI)e_n\|=1+r-2^{-n}$. Since
$\|e_n\|=1$, we see that $\|T+zI\|\geq 1+r$. Since the opposite
inequality is also true, $\|T+zI\|=1+r$. It remains to show that
$T+zI$ does not attain its norm. Assume the contrary. Then there
exists $x\in \ell_1$ such that $\|x\|=1$ and $\|y\|=1+r$, where
$y=zx+Tx$. By definition of $T$,
$$
1+r=\|y\|=\biggl|zx_0+\!\sum_{n=1}^\infty
(1-2^{-n})x_n\biggr|\!+\!r\sum_{n=1}^\infty |x_n|\leq
r|x_0|+\sum_{n=1}^\infty (1+r-2^{-n})|x_n|<(1+r)\|x\|=1+r.
$$
The latter inequality is due to the fact that the coefficients $r$
and $1+r-2^{-n}$ in the last sum are strictly less than $1+r$. This
contradiction completes the proof.
\end{proof}

The following Proposition clarifies the situation with the above two
operators and formula (\ref{ps4}).

\begin{proposition}\label{rank1}
Let $T$ be the operator from either Example~$\ref{exc0}$ or
Example~$\ref{exl1}$ in the case $\K=\C$. Then for any $\epsilon>0$,
$\Sigma^0_\epsilon(T)=\sigma_\epsilon(T)\neq\Sigma_\epsilon(T)$.
\end{proposition}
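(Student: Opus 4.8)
The plan is to compute the resolvent of $T$ explicitly, read off the two pseudospectra as concentric disks, and then use Proposition~\ref{ll0} together with the non-attainment already established in Examples~\ref{exc0} and~\ref{exl1} to locate $\Sigma^0_\epsilon(T)$ exactly.

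First I would record that, since $T^2=0$, we have $\sigma(T)=\{0\}$ and, for every $z\neq0$, the identity $(T-zI)^{-1}=-z^{-2}(T+zI)$; this is checked at once by multiplying out $(T-zI)(T+zI)=T^2-z^2I=-z^2I$. Consequently $(T-zI)^{-1}$ is a nonzero scalar multiple of $T+zI$, so by Examples~\ref{exc0} and~\ref{exl1} its norm is $\|(T-zI)^{-1}\|=|z|^{-2}\|T+zI\|=|z|^{-2}(1+|z|)$. Writing $r=|z|$ and $h(r)=(1+r)r^{-2}$, the function $h$ is strictly decreasing from $+\infty$ to $0$ on $(0,\infty)$, so there is a unique $\rho>0$ with $h(\rho)=\epsilon^{-1}$. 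Since $\|(T-zI)^{-1}\|=h(|z|)$ for $z\neq0$ and $0\in\sigma(T)$, it follows directly from the definitions that $\sigma_\epsilon(T)=\{z:|z|<\rho\}$ and $\Sigma_\epsilon(T)=\{z:|z|\le\rho\}$. In particular these two sets differ exactly by the circle $|z|=\rho$, which proves $\sigma_\epsilon(T)\neq\Sigma_\epsilon(T)$.

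It remains to show $\Sigma^0_\epsilon(T)=\sigma_\epsilon(T)$. The inclusion $\sigma_\epsilon(T)\subseteq\Sigma^0_\epsilon(T)$ is immediate from (\ref{ps3}), since enlarging the perturbation set from $\{\|A\|<\epsilon\}$ to $\{\|A\|\le\epsilon\}$ can only enlarge the union; together with (\ref{ps5}) this traps $\Sigma^0_\epsilon(T)$ between the open and the closed disk, so the only points in question are those on the boundary circle $|z|=\rho$. Fix such a $z$. Then $\|(T-zI)^{-1}\|=h(\rho)=\epsilon^{-1}$, and moreover $\epsilon^{-1}=\rho^{-1}+\rho^{-2}>\rho^{-1}=|z|^{-1}$, so the hypothesis of condition (\ref{ll0}.2) is met; since $T$ has rank one and is therefore compact, Proposition~\ref{ll0} tells us that $z\in\Sigma^0_\epsilon(T)$ if and only if $(T-zI)^{-1}$ attains its norm. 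But $(T-zI)^{-1}=-z^{-2}(T+zI)$ is a nonzero scalar multiple of $T+zI$, and a nonzero scalar multiple attains its norm precisely when the original operator does; as $z\neq0$, Examples~\ref{exc0} and~\ref{exl1} guarantee that $T+zI$ does not attain its norm, hence neither does $(T-zI)^{-1}$. Therefore no boundary point lies in $\Sigma^0_\epsilon(T)$, and we conclude $\Sigma^0_\epsilon(T)=\sigma_\epsilon(T)$.

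The computation of the resolvent and its norm is routine given $T^2=0$ and the Examples; the one genuinely load-bearing step is the application of Proposition~\ref{ll0} on the boundary circle, where everything hinges on the strict inequality $\epsilon^{-1}>|z|^{-1}$ (equivalently $\rho^{-2}>0$) that puts us squarely in case (\ref{ll0}.2) and lets the non-attainment of $T+zI$ do the work.
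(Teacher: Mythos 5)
Your proof is correct and takes essentially the same route as the paper: the identity $(T-zI)^{-1}=-z^{-2}(T+zI)$ coming from $T^2=0$, the norm value $|z|^{-1}+|z|^{-2}$ and the non-attainment supplied by Examples~\ref{exc0} and~\ref{exl1}, and Proposition~\ref{ll0} to exclude the boundary circle from $\Sigma^0_\epsilon(T)$. Your explicit trapping of $\Sigma^0_\epsilon(T)$ between the open and closed disks via (\ref{ps3}) and (\ref{ps5}) simply spells out what the paper compresses into a single invocation of Proposition~\ref{ll0}, and working with the abstract radius $\rho$ defined by $h(\rho)=\epsilon^{-1}$ is a harmless variant of the paper's closed-form expression for that radius.
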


\begin{proof} Since $T^2=0$, $\sigma(T)=\{0\}$ and for any
$z\in\C\setminus\{0\}$, $(T-zI)^{-1}=(-z^{-2})(T+zI)$. Thus
$(T-zI)^{-1}$ attains its norm if and only if so does $T+zI$ and
$\|(T-zI)^{-1}\|=|z|^{-2}\|T+zI\|=|z|^{-1}+|z|^{-2}>|z|^{-1}$. Since
$T+zI$ never attains its norm, we, applying Proposition~\ref{ll0},
see that
$$
\Sigma^0_\epsilon(T)=\sigma_\epsilon(T)=\{z\in\C:|z|^{-1}+|z|^{-2}>\epsilon^{-1}\}=
\{z:|z|<\epsilon+\sqrt{4\epsilon+\epsilon^2}\}\ \ \text{for any}\ \
\epsilon>0.
$$
On the other hand,
$$
\Sigma_\epsilon(T)=\{z\in\C:|z|^{-1}+|z|^{-2}\geq \epsilon^{-1}\}=
\{z:|z|\leq \epsilon+\sqrt{4\epsilon+\epsilon^2}\}\ \ \text{for
any}\ \ \epsilon>0.
$$
Clearly $\Sigma^0_\epsilon(T)\neq\Sigma_\epsilon(T)$ for each
$\epsilon>0$.
\end{proof}

\subsection{Proof of Proposition~\ref{ex}}

Recall that a subset $A$ of a vector space $X$ is called {\it
balanced} if $\lambda x\in A$ whenever $x\in A$, $\lambda\in \K$ and
$|\lambda|\leq1$. A set is called {\it absolutely convex} if it is
convex and balanced. By $\aco(A)$ we denote the absolutely convex
hull of $A$, being the minimal absolutely convex set containing $A$.
Clearly
\begin{equation}\label{ac0}
\aco(A)=\biggl\{\sum_{j=1}^n\lambda_jx_j:n\in\N,\ x_1,\dots,x_n\in
A,\ \lambda_1,\dots,\lambda_n\in \K,\
\sum_{j=1}^n|\lambda_j|\leq1\biggr\}.
\end{equation}
For a subset $A$ of a topological vector space $X$, $\acon(A)$
stands for the closure of $\aco(A)$. We recall two elementary
properties of absolutely convex hulls. The proof of the first one
can be found in virtually any book on topological vector spaces,
see, for instance, \cite{shifer}. The second one is proved in
\cite{bonet}. For a different proof see \cite{stas}.

\begin{lemma}\label{conv1} Let $n\in\N$ and $K_1,\dots,K_n$ be
compact convex subsets of a Hausdorff topological vector space $X$.
Then
$$
\aco\biggl(\bigcup_{j=1}^nK_j\biggr)=\acon\biggl(\bigcup_{j=1}^nK_j\biggr)=
\biggl\{\sum_{j=1}^n\lambda_jx_j:\lambda_j\in\K,\ x_j\in K_j,\
\sum_{j=1}^n|\lambda_j|\leq 1\biggr\}.
$$
Moreover, the above set is compact.
\end{lemma}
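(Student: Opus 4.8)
The plan is to denote by $M$ the set on the right-hand side and to prove the two equalities by establishing, in this order, that $M$ is compact, that $M=\aco\bigl(\bigcup_{j=1}^nK_j\bigr)$, and that the closed-hull equality then follows at once. Throughout I assume each $K_j$ is nonempty; any empty factor may simply be discarded.

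First I would realise $M$ as a continuous image of a compact set. Put $\Delta=\{\lambda\in\K^n:\sum_{j=1}^n|\lambda_j|\leq1\}$, the closed unit ball of $\K^n$ for the $\ell_1$-norm, which is compact, and let $\Phi:\K^n\times X^n\to X$ be the map $\Phi(\lambda,x_1,\dots,x_n)=\sum_{j=1}^n\lambda_jx_j$. Since addition and scalar multiplication are continuous in a topological vector space, $\Phi$ is continuous, and $M=\Phi(\Delta\times K_1\times\dots\times K_n)$. The finite product $\Delta\times K_1\times\dots\times K_n$ is compact, so $M$ is compact, and, being a compact subset of the Hausdorff space $X$, it is closed.

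Next I would prove $M=\aco\bigl(\bigcup_{j=1}^nK_j\bigr)$ by two inclusions. The inclusion $M\subseteq\aco\bigl(\bigcup_jK_j\bigr)$ is immediate from (\ref{ac0}), since each summand $\lambda_jx_j$ has $x_j\in K_j\subseteq\bigcup_jK_j$ and $\sum_j|\lambda_j|\leq1$. For the reverse inclusion it suffices, by minimality of the absolutely convex hull, to verify that $M$ is absolutely convex and contains $\bigcup_jK_j$. Containment is clear, and balancedness is routine: multiplying $\sum_j\lambda_jx_j$ by $\mu$ with $|\mu|\leq1$ only replaces $\lambda_j$ by $\mu\lambda_j$, and $\sum_j|\mu\lambda_j|=|\mu|\sum_j|\lambda_j|\leq1$. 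The crux — and the step I expect to be the main obstacle — is the \emph{convexity} of $M$. Given $v=\sum_j\lambda_jx_j$ and $w=\sum_j\mu_jy_j$ in $M$ and scalars $a,b$ with $|a|+|b|\leq1$, I would recombine $av+bw$ index by index: set $\nu_j=|a\lambda_j|+|b\mu_j|$ and, whenever $\nu_j\neq0$, $z_j=\nu_j^{-1}(a\lambda_jx_j+b\mu_jy_j)$, so that $av+bw=\sum_j\nu_jz_j$. The coefficient bound is automatic, as $\sum_j|\nu_j|=|a|\sum_j|\lambda_j|+|b|\sum_j|\mu_j|\leq|a|+|b|\leq1$. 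The delicate point is $z_j\in K_j$: rewriting $z_j=\tfrac{|a\lambda_j|}{\nu_j}\bigl(\tfrac{a\lambda_j}{|a\lambda_j|}x_j\bigr)+\tfrac{|b\mu_j|}{\nu_j}\bigl(\tfrac{b\mu_j}{|b\mu_j|}y_j\bigr)$ displays it as a convex combination of two points of $K_j$, so that $z_j\in K_j$ precisely because each $K_j$ is absolutely convex. Absorbing the unimodular phases of the scalar coefficients into the points is exactly where the hypothesis on the $K_j$ enters, and this is the heart of the argument.

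Finally, the closed absolutely convex hull costs nothing extra: by definition $\acon\bigl(\bigcup_jK_j\bigr)$ is the closure of $\aco\bigl(\bigcup_jK_j\bigr)=M$, and $M$ is already closed, whence $\acon\bigl(\bigcup_jK_j\bigr)=M$ as well. Together with the compactness established in the second step, this yields both displayed equalities and the final assertion.
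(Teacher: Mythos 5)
Your overall architecture is the standard one — and the paper itself gives no proof of this lemma, only a citation to Schaefer's book — but there is one genuine mismatch between your argument and the statement as printed. In the convexity step you justify $z_j\in K_j$ ``precisely because each $K_j$ is absolutely convex,'' and you even call this the point where the hypothesis on the $K_j$ enters. The lemma, however, assumes only that the $K_j$ are compact and \emph{convex}; balancedness is not among the hypotheses. Your recombination genuinely needs it: the unimodular phases $a\lambda_j/|a\lambda_j|$ and $b\mu_j/|b\mu_j|$ must be absorbed into the points $x_j,y_j$, which is impossible for a merely convex $K_j$. So, read against the literal statement, this step is unjustified.

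What softens the verdict is that the literal statement is itself false, so no argument could close this gap: take $n=1$ (or pad with $K_2=\{0\}$), $X=\R^2$ and $K_1$ the segment joining $(1,0)$ to $(1,1)$, which is compact and convex. Then
$$
\bigl(0,\tfrac12\bigr)=\tfrac12(1,1)+\bigl(-\tfrac12\bigr)(1,0)\in\aco(K_1),
$$
yet $(0,\tfrac12)$ is not of the form $\lambda x$ with $x\in K_1$ and $|\lambda|\leq1$, since the first coordinate forces $\lambda=0$; in particular the right-hand set is not even convex here. The intended hypothesis — the one under which the result appears in Schaefer (convex \emph{circled} compact sets), and the one satisfied by the sets $B_1,B_2,B_3$ to which the paper actually applies the lemma in the proof of Proposition~\ref{ex} — is that the $K_j$ be compact and \emph{absolutely} convex. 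Under that reading your proof is complete and correct: compactness of $M=\Phi(\Delta\times K_1\times\dots\times K_n)$ and hence closedness in the Hausdorff space $X$; the two inclusions giving $M=\aco\bigl(\bigcup_jK_j\bigr)$, with the degenerate cases $\nu_j=0$ or $a\lambda_j=0\neq b\mu_j$ handled trivially; and $\acon=\aco=M$ because $M$ is closed. You should state explicitly that you are proving the lemma with ``absolutely convex'' in place of ``convex'' (the counterexample above shows this strengthening is necessary, and it costs the paper nothing downstream).
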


\begin{lemma}\label{conv2} Let $\{x_n\}_{n\in\N}$ be a sequence of
elements of a sequentially complete locally convex Hausdorff
topological vector space $X$, converging to $x\in X$ as $n\to
\infty$. Then
$$
\acon(A)=\biggl\{\alpha_0x+\sum_{n=1}^\infty\alpha_nx_n:\alpha\in\ell_1,\
\|\alpha\|_1\leq 1\biggr\},\ \ \text{where}\ \ A=\{x_n:n\in\N\}.
$$
Moreover, $\acon(A)$ is metrizable and compact.
\end{lemma}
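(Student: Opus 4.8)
Write $B$ for the set on the right-hand side. The plan is to establish the two easy inclusions $\aco(A)\subseteq B\subseteq\acon(A)$ first, and then to show that $B$ is closed, compact and metrizable; the equality $\acon(A)=B$ and the ``moreover'' part both follow at once. First I would check that the series defining a point of $B$ converges: the set $\{x_n:n\in\N\}\cup\{x\}$ is compact, being a convergent sequence together with its limit, hence bounded, so for $\alpha\in\ell_1$ the partial sums $\sum_{n\le N}\alpha_nx_n$ are Cauchy (for a continuous seminorm $p$, a block is bounded by $\sup_n p(x_n)$ times an $\ell_1$-tail), and sequential completeness gives convergence. A direct computation shows $B$ is absolutely convex and contains $A$, whence $\aco(A)\subseteq B$. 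Conversely, every $b\in B$ is the limit of its partial sums $\alpha_0x+\sum_{n\le N}\alpha_nx_n$, each of which lies in $\acon(A)$ because $x\in\acon(A)$ (it is a limit of the $x_n$) and $\acon(A)$ is closed and absolutely convex; thus $B\subseteq\acon(A)$ and $\acon(A)=\overline{\aco(A)}\subseteq\overline B$.

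It remains to prove that $B$ is closed (then $\acon(A)=B$) and compact metrizable. Here I would exploit the hypothesis $x_n\to x$ by setting $z_n=x_n-x\to0$ and rewriting $b=\alpha_0x+\sum_{n\ge1}\alpha_nx_n=sx+\sum_{n\ge1}\alpha_nz_n$, where $s=\sum_{n\ge0}\alpha_n$ satisfies $|s|\le1$. The auxiliary map $\Xi(s,\gamma)=sx+\sum_{n\ge1}\gamma_nz_n$ is continuous from the product of the scalar disk $\{|s|\le1\}$ with a norm-ball of $\ell_1$ carrying its weak-$*$ topology (viewing $\ell_1=c_0^*$) into $X$: for a continuous seminorm $p$ the tail of $\sum\gamma_nz_n$ is uniformly small because $z_n\to0$, while the head is a finite sum controlled by coordinatewise, i.e.\ weak-$*$, convergence. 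By Banach--Alaoglu and separability of $c_0$ the domain is compact and metrizable, so $\Xi$ maps it onto a compact metrizable subset of $X$ which, for a large enough ball, contains $B$.

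The crux, and the only real difficulty, is the closedness of $B$. The obstruction is that the total-mass functional $\alpha\mapsto\sum_n\alpha_n$ is \emph{not} weak-$*$ continuous, so $B$ is not visibly a continuous image of a compact set and mass may escape to infinity. I would instead verify sequential closedness by hand. Given $b^{(k)}\to b$ with $b^{(k)}\in B$ arising from coefficients $\alpha^{(k)}$, pass to a subsequence so that $\alpha^{(k)}\to\alpha$ weak-$*$ and the masses $s^{(k)}=\sum_{n\ge0}\alpha^{(k)}_n$ converge to some $s$; the ``good'' part converges, $\sum_{n\ge1}\alpha^{(k)}_nz_n\to\sum_{n\ge1}\alpha_nz_n$, forcing $b=sx+\sum_{n\ge1}\alpha_nz_n=\tilde\alpha_0x+\sum_{n\ge1}\alpha_nx_n$ with $\tilde\alpha_0=s-\sum_{n\ge1}\alpha_n$. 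The point is to check $|\tilde\alpha_0|+\sum_{n\ge1}|\alpha_n|\le1$. Here I would bound the escaping mass $\bigl|\lim_k\sum_{n\ge1}\alpha^{(k)}_n-\sum_{n\ge1}\alpha_n\bigr|$ by $\lim_k\sum_{n\ge1}|\alpha^{(k)}_n|-\sum_{n\ge1}|\alpha_n|$ (weak-$*$ lower semicontinuity of the $\ell_1$-norm on the tail, applied after truncation) and combine it with the budget $|\alpha_0|+\lim_k\sum_{n\ge1}|\alpha^{(k)}_n|\le1$ (obtained by taking $\liminf$ in $\|\alpha^{(k)}\|_1\le1$ and using $\alpha^{(k)}_0\to\alpha_0$); these collapse to $|\tilde\alpha_0|+\sum_{n\ge1}|\alpha_n|\le|\alpha_0|+\lim_k\sum_{n\ge1}|\alpha^{(k)}_n|\le1$, so $b\in B$. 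The role of the assumption $x_n\to x$ is exactly this: the escaping mass is reabsorbed by the already-present vector $x$.

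Finally, since $\overline B$ is contained in the compact metrizable set produced by $\Xi$, it is itself metrizable, so each of its points is a sequential limit of points of $B$; the sequential closedness just proved then yields $\overline B=B$. Hence $B$ is a closed subset of a compact metrizable set, so it is compact and metrizable, and $\acon(A)=B$, which completes the proof.
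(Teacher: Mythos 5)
Your argument is correct, but note that the paper does not prove this lemma at all: it is quoted with references, a proof in \cite{bonet} and a different proof in \cite{stas}, so your write-up is necessarily an independent route, and it survives scrutiny. The sandwich $\aco(A)\subseteq B\subseteq\acon(A)$ is right; the parametrization $\Xi(s,\gamma)=sx+\sum_{n\geq1}\gamma_nz_n$ with $z_n=x_n-x\to0$ is continuous on the disk times the weak-$*$ ball of $\ell_1=c_0^*$ by exactly your head/tail estimate (and the unit ball already suffices, since $|s|\leq\|\alpha\|_1\leq1$ --- no ``large enough ball'' is needed); and your crux, the sequential closedness of $B$, is sound: after one more extraction so that $t=\lim_k\sum_{n\geq1}|\alpha^{(k)}_n|$ exists (you tacitly use this limit; alternatively work with $\limsup$), the truncation argument does give $\bigl|\lim_k\sum_{n\geq1}\alpha^{(k)}_n-\sum_{n\geq1}\alpha_n\bigr|\leq t-\sum_{n\geq1}|\alpha_n|$, which together with the budget $|\alpha_0|+t\leq1$ yields $|\tilde\alpha_0|+\sum_{n\geq1}|\alpha_n|\leq1$; the limit $\lim_k\sum_{n\geq1}\alpha^{(k)}_n$ exists automatically, being $s-\alpha_0$. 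The passage from sequential closedness to closedness via metrizability of $\overline B$ is also legitimate (a continuous Hausdorff image of a compact metric space is compact metrizable). One simplification worth knowing, which is essentially the classical proof: your ``only real difficulty'' disappears if you change the predual. Regard $\ell_1(\N\cup\{0\})$ as the dual of $c$ (convergent sequences) rather than of $c_0$, via $\langle\alpha,u\rangle=\alpha_0\lim_nu_n+\sum_{n\geq1}\alpha_nu_n$; then the total-mass functional $\alpha\mapsto\sum_{n\geq0}\alpha_n$ \emph{is} weak-$*$ continuous (pair against the constant sequence $1$), the coordinates $\alpha\mapsto\alpha_n$, $n\geq1$, remain continuous, and the map $\alpha\mapsto\bigl(\sum_{n\geq0}\alpha_n\bigr)x+\sum_{n\geq1}\alpha_nz_n=\alpha_0x+\sum_{n\geq1}\alpha_nx_n$ is weak-$*$ continuous on the ball by the same head/tail estimate. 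Then $B$ is outright the continuous image of a compact metrizable set, hence compact and metrizable, and closedness comes for free; your hands-on reabsorption-of-escaping-mass argument is, in effect, a proof by hand of this hidden weak-$*$ continuity.
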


From now on in this section, by $\|\cdot\|_2$ we denote the
canonical norm on $\ell_2$. We use the same symbol to denote the
standard Euclidean norm on $\K^2$:
$$
\|(t,s)\|_2=(|t|^2+|s|^2)^{1/2}.
$$
Let also $\{e_n\}_{n\in\N}$ be the canonical orthonormal basis in
$\ell_2$. For $x\in\ell_2$ we denote
$$
x'=x-x_1e_1-x_2e_2.
$$
That is, $x'$ is the orthogonal projection of $x$ onto the closed
linear span of the vectors $e_3,e_4,\dots$. Fix a sequence
$\{q_n\}_{n\in\N}$ of positive numbers such that
$1/2<q_n<1/\sqrt{2}$ for each $n\in\N$ and
$\lim\limits_{n\to\infty}q_n=1/2$. Consider the set $B\subset
\ell_2$,
\begin{equation}\label{B}
B=\biggl\{x+\sum_{n=1}^\infty
(\alpha_n(e_2+e_{n+2})+\beta_nq_n(e_1+e_2+e_{n+2})):\|x'\|_2+\|(x_1,x_2)\|_2+\|\alpha\|_1+\|\beta\|_1
\leq 1\biggr\},
\end{equation}
where $x\in \ell_2$, $\alpha,\beta\in\ell_1$ and $\|\cdot\|_1$ is
the canonical norm in $\ell_1$. Obviously, $B$ is absolutely convex.
Taking into account that $\|e_2+e_{n+2}\|_2=\sqrt{2}$ and
$\|q_n(e_1+e_2+e_{n+2})\|_2=q_n\sqrt{3}\leq\sqrt{3/2}$, we see that
\begin{equation}\label{B1}
\|u\|_2\leq\sqrt{2}\ \ \ \text{for any}\ \ \ u\in B.
\end{equation}
Now if $\|u\|_2\leq 1/2$, then $\|u'\|_2^2+|u_1|^2+|u_2|^2\leq1/4$.
An elementary application of the Cauchy inequality gives
$\|u'\|_2+\|(u_1,u_2)\|_2\leq 1$. Taking $\alpha=\beta=0$ and $x=u$,
we see then that $u\in B$. Thus
\begin{equation}\label{B2}
u\in B\ \ \ \text{if}\ \ \ \|u\|_2\leq1/2.
\end{equation}

We consider the norm $\|\cdot\|$ on $\ell_2$ being the Minkowski
functional of the set $B$. Formulae (\ref{B1}) and (\ref{B2}) imply
that it is indeed a norm and that it is equivalent to the Hilbert
space norm $\|\cdot \|_2$:
$$
2^{-1/2}\|u\|_2\leq \|u\| \leq2\|u\|_2\ \ \text{for all
$u\in\ell_2$}.
$$
In particular, $\ell_2$ endowed with the norm $\|\cdot\|$ is a
reflexive Banach space. Using the definition of the Minkowski
functional, we have that for $u\in \ell_2$,
\begin{equation} \|u\|=\inf
\biggl\{\|x'\|_2+\|(x_1,x_2)\|_2+\|\alpha\|_1+\|\beta\|_1:
u=x+\sum_{n=1}^\infty
(\alpha_n(e_2+e_{n+2})+\beta_nq_n(e_1+e_2+e_{n+2}))\biggr\}.\label{norm}
\end{equation}

We shall show that $B$ coincides with the closed unit ball with
respect to the norm $\|\cdot\|$. Since $B$ is bounded and absolutely
convex, it suffices to show that $B$ is closed in $\ell_2$. First,
note that the set
\begin{equation}\label{b01}
B_1=\{x\in\ell_2:\|x'\|_2+\|(x_1,x_2)\|_2\leq 1\}
\end{equation}
is weakly
compact and $B_1\subseteq B$. Next, let
$B_2=\acon\{e_2+e_{n+2}:n\in\N\}$. Since the sequence $e_2+e_{n+2}$
converges weakly to $e_2$, Lemma~\ref{conv2} implies that $B_2$ is
weakly compact and
\begin{equation}\label{b02}
B_2=\biggl\{se_2+\sum_{n=1}^\infty
\alpha_n(e_2+e_{n+2}):|s|+\|\alpha\|_1\leq 1\biggr\}.
\end{equation}
It follows that $B_2\subseteq B$. Indeed, for
$u=se_2+\sum\limits_{n=1}^\infty \alpha_n(e_2+e_{n+2})\in B_2$, one
has just to take $x=se_2$ and $\beta=0$ to see that $u\in B$.
Similarly, let $B_3=\acon\{q_n(e_1+e_2+e_{n+2}):n\in\N\}$. Since the
sequence $q_n(e_1+e_2+e_{n+2})$ converges weakly to $(e_1+e_2)/2$,
Lemma~\ref{conv2} implies that $B_3$ is weakly compact and
\begin{equation}\label{b03}
B_3=\biggl\{\frac t2(e_1+e_2)+\sum_{n=1}^\infty
\beta_nq_n(e_1+e_2+e_{n+2}):|t|+\|\beta\|_1\leq 1\biggr\}.
\end{equation}
As above, it is clear that $B_3\subseteq B$. Since $B$ is absolutely
convex, we have
$$
B_0=\aco(B_1\cup B_2\cup B_3)\subseteq B.
$$
By Lemma~\ref{conv1}, $B_0$ is weakly compact and
\begin{equation}\label{b00}
B_0=\{ax+by+cw:x\in B_1,\ y\in B_2,\ w\in B_3,\ |a|+|b|+|c|\leq 1\}.
\end{equation}
From formulae (\ref{b01}--\ref{b00}) and (\ref{B}) it follows that
$B\subseteq B_0$. Hence $B=B_0$ and therefore $B$ is weakly compact.
Thus $B$ is closed in $\ell_2$ which ensures that $B$ is the closed
unit ball for the norm (\ref{norm}). It follows that the infimum in
(\ref{norm}) is always attained and that we can write
\begin{equation} \|u\|=\min
\biggl\{\|x'\|_2+\|(x_1,x_2)\|_2+\|\alpha\|_1+\|\beta\|_1:
u=x+\sum_{n=1}^\infty
(\alpha_n(e_2+e_{n+2})+\beta_nq_n(e_1+e_2+e_{n+2}))\biggr\}.\label{normm}
\end{equation}

\begin{lemma}\label{nor} The norm on $\ell_2$ defined by $(\ref{norm})$
satisfies the following conditions:
\begin{itemize}
\item[\rm (\ref{nor}.1)] $\|e_2+e_{n+2}\|=1$ for any $n\in \N$;
\item[\rm (\ref{nor}.2)] $q_n\|e_1+e_2+e_{n+2}\|=1$ for any $n\in
\N$.
\end{itemize}
\end{lemma}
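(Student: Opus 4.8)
The plan is to prove each equality by sandwiching: a one-line representation from (\ref{normm}) gives the upper bound, and an explicitly chosen supporting functional gives the matching lower bound. Since it was shown just above that $B$ is the closed unit ball of $\|\cdot\|$ and that $B=\aco(B_1\cup B_2\cup B_3)$, the Minkowski functional admits the dual description
$$
\|u\|=\sup\bigl\{|\langle u,w\rangle|:w\in\ell_2,\ \sup\nolimits_{v\in B}|\langle v,w\rangle|\le1\bigr\},
$$
where $\langle\cdot,\cdot\rangle$ is the $\ell_2$ inner product. Because a continuous linear functional attains its supremum over an absolutely convex hull (and its closure) on the generating set, for any $w\in\ell_2$ one has
$$
\sup_{v\in B}|\langle v,w\rangle|=\max\Bigl\{\max(\|(w_1,w_2)\|_2,\|w'\|_2),\ \sup_n|\langle e_2+e_{n+2},w\rangle|,\ \sup_n q_n|\langle e_1+e_2+e_{n+2},w\rangle|\Bigr\},
$$
the three entries being the support functions of $B_1$, $B_2$ and $B_3$ from (\ref{b01})--(\ref{b03}); the $B_1$-entry is read off directly from its definition as an $\ell_1$-coupling of two Euclidean balls. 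Thus to establish a lower bound $\|u\|\ge c$ it suffices to exhibit a single $w$ with $\langle u,w\rangle=c$ for which the right-hand maximum is at most $1$.

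For (\ref{nor}.1) the bound $\|e_2+e_{n+2}\|\le1$ is immediate from (\ref{normm}) on taking $x=0$, $\beta=0$ and $\alpha$ the $n$-th unit vector, giving cost $\|\alpha\|_1=1$. For the reverse inequality I would test against $w=\tfrac12(e_2+e_{n+2})$: then $\langle e_2+e_{n+2},w\rangle=1$, while the $B_1$-entry equals $\tfrac12$, the $B_2$-entry equals $1$ (attained at index $n$) and the $B_3$-entry is at most $q_n<1/\sqrt2<1$. Hence the maximum above is $1$, so $\|e_2+e_{n+2}\|\ge1$ and equality (\ref{nor}.1) follows.

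For (\ref{nor}.2) the bound $q_n\|e_1+e_2+e_{n+2}\|\le1$ is immediate from (\ref{normm}) on taking $x=0$, $\alpha=0$ and $\beta_m=q_n^{-1}[m=n]$, which reproduces $e_1+e_2+e_{n+2}$ at cost $\|\beta\|_1=q_n^{-1}$. The delicate point is the lower bound $\|e_1+e_2+e_{n+2}\|\ge q_n^{-1}$. A naive symmetric functional supported on all three coordinates fails, since $1/2<q_n<1/\sqrt2$ permits $q_n<2/3$ and then the $B_2$-constraint at index $n$ is violated. Instead I would put no mass on $e_2$ and take
$$
w=(q_n^{-1}-1)\,e_1+e_{n+2}.
$$
Then $\langle e_1+e_2+e_{n+2},w\rangle=q_n^{-1}$; the $B_1$-entry is $\max(q_n^{-1}-1,1)=1$ because $q_n^{-1}<2$; the $B_2$-entry is $\sup_m[m=n]=1$; and the $B_3$-entry equals $q_n\cdot q_n^{-1}=1$ at $m=n$ while for $m\ne n$ it is $q_m(q_n^{-1}-1)<(1/\sqrt2)\cdot1<1$. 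The maximum is therefore $1$, yielding $\|e_1+e_2+e_{n+2}\|\ge q_n^{-1}$ and hence (\ref{nor}.2).

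The main obstacle is exactly the choice of functional in (\ref{nor}.2): one must balance the $B_1$-constraint $\|(w_1,w_2)\|_2\le1$ against the $B_2$-constraint at index $n$, and the successful choice transfers all the $e_2$-weight onto $e_1$ and $e_{n+2}$. It is the two-sided bound $1/2<q_n<1/\sqrt2$ that makes everything fit: $q_n>1/2$ gives $q_n^{-1}-1<1$ so the $B_1$-entry stays at $1$, while $q_n<1/\sqrt2$ keeps both the $B_3$ off-diagonal terms and the $B_2$ terms from the other parts strictly below $1$.
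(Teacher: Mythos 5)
Your proof is correct, but it takes a genuinely different route from the paper's. The paper argues both parts by contradiction: assuming a representation of $e_2+e_{n+2}$ (respectively of $q_n(e_1+e_2+e_{n+2})$) with total cost strictly less than $1$ as in (\ref{uu1}), it pairs the representation with coordinate vectors --- with $e_2$ for (\ref{nor}.1), and with $e_{n+2}$, $e_1$, $e_2$ for (\ref{nor}.2), where it then normalises by $1-\beta_n$ and uses $q_n>1/2$ to derive the contradiction with (\ref{fi}). You instead prove the lower bounds by exhibiting explicit elements of the polar of $B$, using that $\sup_{v\in B}|\langle v,w\rangle|$ is the maximum of the support functions of $B_1$, $B_2$, $B_3$; note that only the easy direction of duality is needed (for $w$ in the polar, $|\langle u,w\rangle|\leq\|u\|$ since $u/\|u\|\in B$, the paper having shown $B$ is the closed unit ball), so the full dual description you quote, which requires the bipolar theorem, can be dispensed with. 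Your computations check out: for (\ref{nor}.1) the functional $\frac12(e_2+e_{n+2})$ works (in fact $w=e_2$ is even simpler, and pairing with it is exactly what the paper's inner product with $e_2$ amounts to), and for (\ref{nor}.2) your $w=(q_n^{-1}-1)e_1+e_{n+2}$ has pairing $q_n^{-1}$ with $e_1+e_2+e_{n+2}$ while all three support entries are at most $1$, precisely because $q_n>1/2$ gives $q_n^{-1}-1<1$. What your approach buys is a transparent replacement of the paper's most delicate step --- the identity in $\K^3$ and the normalisation trick in the proof of (\ref{nor}.2) --- by a one-line verification of membership in the polar; what the paper's approach buys is self-containedness at the level of the representation (\ref{B}), with no appeal to support-function calculus or to the decomposition $B=\aco(B_1\cup B_2\cup B_3)$. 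One small overstatement at the end: the bound $q_n<1/\sqrt2$ is not actually needed in your lower-bound estimates, since $q_m<1$ already keeps the off-diagonal $B_3$ terms below $1$; as in the paper, it is only $q_n>1/2$ that is essential here (the upper bound $q_n<1/\sqrt2$ is used elsewhere, e.g.\ for (\ref{B1}) and in Lemma~\ref{oper}).
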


\begin{proof} Taking $x=0$, $\beta=0$ and $\alpha=e_n$, we see that
$e_2+e_{n+2}\in B$. Hence $\|e_2+e_{n+2}\|\leq 1$. Assume that
$\|e_2+e_{n+2}\|<1$. Then there exist $x\in\ell_2$ and
$\alpha,\beta\in\ell_1$ such that
\begin{equation}\label{uu1}
\|x'\|_2+\|(x_1,x_2)\|_2+\|\alpha\|_1+\|\beta\|_1<1
\end{equation}
and
$$
e_2+e_{n+2}=x+\sum_{k=1}^\infty
(\alpha_k(e_2+e_{k+2})+\beta_kq_k(e_1+e_2+e_{k+2})).
$$
Taking the inner product of both sides of the above equality with
$e_2$, we obtain
$$
1=x_2+\sum_{k=1}^\infty(\alpha_k+q_k\beta_k).
$$
Hence
$$
|x_2|+\sum_{k=1}^\infty (|\alpha_k|+q_k|\beta_k|)\geq1,
$$
which contradicts (\ref{uu1}). This contradiction proves
(\ref{nor}.1).

Taking $x=0$, $\alpha=0$ and $\beta=e_n$, we see that
$q_n(e_1+e_2+e_{n+2})\in B$. Hence $q_n\|e_1+e_2+e_{n+2}\|\leq 1$.
Assume that $q_n\|e_1+e_2+e_{n+2}\|<1$. Then there exist
$x\in\ell_2$ and $\alpha,\beta\in\ell_1$ such that
$$
q_n(e_1+e_2+e_{n+2})=x+\sum_{k=1}^\infty
(\alpha_k(e_2+e_{k+2})+\beta_kq_k(e_1+e_2+e_{k+2})).
$$
and (\ref{uu1}) is satisfied. Taking the inner product of both sides
of the above equality with $e_{n+2}$, $e_1$ and $e_2$ we obtain the
following equality in $\K^3$:
\begin{align}\notag
&\quad
q_n(1-\beta_n)(1,1,1)=(x_{n+2},x_1,x_2)+\alpha_n(1,0,1)+\sum_{k\neq
n}(\alpha_k(0,0,1)+\beta_kq_k(0,1,1))=
\\
&=(x_{n+2},\tau,\sigma)+\alpha_n(1,0,1),\ \ \ \text{where}\ \ \
(\tau,\sigma)=(x_1,x_2)+\sum_{k\neq
n}(\alpha_k(0,1)+\beta_kq_k(1,1)). \label{uu3}
\end{align}
Note that $\|(0,1)\|_2=1$ and $\|q_k(1,1)\|_2\leq 1$ since $q_k\leq
1/\sqrt2$. Using (\ref{uu3}) and the triangle inequality, we obtain
$$
\|(\tau,\sigma)\|_2\leq\|(x_1,x_2)\|_2+\sum_{k\neq
n}(|\alpha_k|+|\beta_k|).
$$
From the last display together with (\ref{uu1}) and (\ref{uu3}), it
follows that
$$
q_n(1-\beta_n)(1,1,1)=((x_{n+2}+\alpha_n)e_{n+2},\tau,\sigma+\alpha_n),\
\ \text{where}\ \
|x_{n+2}|+|\alpha_n|+|\beta_n|+\|(\tau,\sigma)\|_2<1.
$$
Dividing by $1-\beta_n$ and denoting $y=x_{n+2}/(1-\beta_n)$,
$a=\alpha_n/(1-\beta_n)$, $r=\tau/(1-\beta_n)$ and
$p=\sigma/(1-\beta_n)$ we see arrive to the following equality in
$\K^3$:
$$
(q_n,q_n,q_n)=(y+a,r,p+a),\ \ \text{where}\ \
|y|+|a|+\sqrt{|r|^2+|p|^2}<1.
$$
Hence $r=q_n$, $p=y=q_n-a$ and
\begin{equation}\label{fi}
|a|+|q_n-a|+\sqrt{|q_n|^2+|q_n-a|^2}<1.
\end{equation}
On the other hand, $q_n>1/2$ and therefore $|a|+|q_n-a|\geq q_n>1/2$
and $\sqrt{|q_n|^2+|q_n-a|^2}\geq q_n>1/2$. Hence
$|a|+|q_n-a|+\sqrt{|q_n|^2+|q_n-a|^2}>1$ which contradicts
(\ref{fi}). This contradiction completes the proof of (\ref{nor}.2).
\end{proof}

\medskip
{\bf Remark.} \ \ In a similar way one can show that $\|u\|=\|u\|_2$
if either $u_1=u_2=0$ or $u$ belongs to the linear span of $e_1$ and
$e_2$
\medskip

Now we consider the operator $S\in L(\ell_2)$ defined by the formula
$Su=u+u_2e_1$. Clearly $S$ is the sum of the identity operator and a
bounded rank 1 operator $Tu=u_2e_1$. Obviously, $T^2=0$.
Proposition~\ref{ex} will be proved if we verify that $\|S\|=2$ and
$S$ does not attain its norm.

\begin{lemma}\label{oper} For any non-zero $u\in \ell_2$,
$\|Su\|<2\|u\|$.
\end{lemma}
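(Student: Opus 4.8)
The plan is to show the strict inequality $\|Su\| < 2\|u\|$ by exploiting the explicit description of the norm given in~(\ref{normm}). Since the norm is defined as a minimum, for any $u \neq 0$ I would start by writing $u$ in an optimal representation
$$
u = x + \sum_{n=1}^\infty \bigl(\alpha_n(e_2+e_{n+2}) + \beta_n q_n(e_1+e_2+e_{n+2})\bigr),
$$
so that $\|u\| = \|x'\|_2 + \|(x_1,x_2)\|_2 + \|\alpha\|_1 + \|\beta\|_1$. Applying $S$ (which adds $u_2 e_1$) produces another vector in $\ell_2$, and the strategy is to exhibit a representation of $Su$ whose cost is strictly below $2\|u\|$, thereby bounding $\|Su\|$ from above via~(\ref{norm}).

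Next I would compute the effect of $S$ on each generator. The key structural observation is that $S$ interacts cleanly with the two building blocks: $e_2+e_{n+2}$ has vanishing $e_1$-coordinate but $e_2$-coordinate $1$, so $S(e_2+e_{n+2}) = e_1+e_2+e_{n+2}$, which is exactly $q_n^{-1}$ times the second type of generator. Meanwhile the generator $q_n(e_1+e_2+e_{n+2})$ already lies (up to scalar) in the fixed subspace behaviour of $S$. The plan is to track the $e_2$-coordinate $u_2$ explicitly in terms of $x_2$, $\alpha$, and $\beta$, then reassemble $Su$ using the generators, trading each $\alpha_n$-term of weight along $e_2+e_{n+2}$ into a $\beta$-type term of weight along $q_n(e_1+e_2+e_{n+2})$. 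Because $1/2 < q_n < 1/\sqrt 2$, converting an $\alpha$-contribution into a $\beta q$-contribution changes the $\ell_1$-cost by a factor governed by $q_n$, and the bound $q_n > 1/2$ is what will keep the total strictly controlled.

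The main obstacle, and the place where the strict inequality must be earned, is ruling out the degenerate case where the cost of the new representation of $Su$ meets $2\|u\|$ exactly. The coefficients $q_n$ satisfy $q_n < 1/\sqrt 2$ strictly and $\lim_n q_n = 1/2$, so no single $q_n$ equals the limiting value $1/2$; the gain from each conversion is strict but shrinks to nothing as $n\to\infty$. Thus a naive term-by-term estimate gives only $\|Su\| \leq 2\|u\|$, and the strictness could be lost in the limit. The plan for overcoming this is to separate the norm of $u$ into the part carried by the $x$-component (on which $S$ acts as $u\mapsto u + u_2e_1$ and changes the $\|(x_1,x_2)\|_2$ term only through a Euclidean, hence strictly convex, adjustment) and the part carried by the infinite generator sums. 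On the finitely-supported or $x$-dominated part the strict convexity of the Euclidean norm $\|(x_1,x_2)\|_2$ yields a genuine strict gap; on the generator part the bound $q_n>1/2$ yields strictness term-by-term. I expect the argument to conclude by showing that an optimal representation of $u$ cannot be simultaneously extremal for both the Euclidean block and every $q_n$-block, so at least one strict inequality always survives, giving $\|Su\| < 2\|u\|$ for every nonzero $u$.
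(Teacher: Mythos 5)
Your skeleton is the paper's: fix an optimal representation of $u$ (the minimum in (\ref{normm}) is attained precisely because $B$ was shown to be weakly compact), apply $S$ to each generator, reassemble a representation of $Su$, and bound its cost coefficient by coefficient. But there is a genuine gap in the middle step. Your claim that the generator $q_n(e_1+e_2+e_{n+2})$ ``already lies (up to scalar) in the fixed subspace behaviour of $S$'' is false: since $Sv=v+v_2e_1$ and this generator has $e_2$-coordinate $q_n\neq 0$, one gets $S\bigl(q_n(e_1+e_2+e_{n+2})\bigr)=q_n(e_1+e_2+e_{n+2})+q_ne_1$, and likewise $Sx=x+x_2e_1$. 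These leaked $e_1$-contributions sum to $\tau=x_2+\sum_n q_n\beta_n$ and must be absorbed into the $\K^2$-block of the representation, replacing $(x_1,x_2)$ by $(x_1+\tau,x_2)$. Controlling this absorption is the substantive half of the paper's proof: peeling off $\sum_n q_n\beta_n$ raises the $\beta$-coefficient to $1+q_n<\frac74$, and the remaining shear $(x_1,x_2)\mapsto(x_1+x_2,x_2)$ has Euclidean operator norm $\bigl(\frac{3+\sqrt5}{2}\bigr)^{1/2}<\frac53$. Your plan accounts for neither term, and ``strict convexity of the Euclidean norm'' is not the relevant mechanism --- what is needed is the quantitative fact that this shear has norm strictly below $2$.

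Your second concern --- that term-by-term estimates only give $\|Su\|\leq 2\|u\|$ with ``strictness lost in the limit,'' to be rescued by an (unproved) non-simultaneous-extremality argument --- addresses a non-issue. For a fixed nonzero $u$ the optimal representation is a single, attained object, and the resulting bound is $\|Su\|\leq \|x'\|_2+\frac53\|(x_1,x_2)\|_2+\frac74\|\beta\|_1+\sum_n q_n^{-1}|\alpha_n|$, in which every coefficient is strictly less than $2$ (each $q_n^{-1}<2$ because $q_n>\frac12$). Since the four masses sum to $\|u\|>0$ by (\ref{estim1}), at least one positive mass is multiplied by a factor strictly below $2$, and $\|Su\|<2\|u\|$ follows immediately; no dichotomy between the Euclidean block and the $q_n$-blocks is needed. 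The limiting phenomenon $q_n^{-1}\to 2$ only bites when one takes the supremum over all $u$, which is exactly why $\|S\|=2$ fails to be attained --- it is the content of the construction, not an obstacle to the fixed-$u$ lemma.
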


\begin{proof} Let $u\in\ell_2$ be such that $\|u\|=1$. It
suffices to show that $\|Su\|<2$. Since $\|u\|=1$, from
(\ref{normm}) it follows that there are $x\in\ell_2$ and
$\alpha,\beta\in\ell_1$ such that
\begin{align}\label{estim2}
&u=x+\sum_{n=1}^\infty
(\alpha_n(e_2+e_{n+2})+\beta_nq_n(e_1+e_2+e_{n+2})),
\\
\label{estim1} &\|x'\|_2+\|(x_1,x_2)\|_2+\|\alpha\|_1+\|\beta\|_1=
1.
\end{align}
Next, from (\ref{estim2}) and the definition of $S$, we obtain that
$$
Su=x+\tau e_1+\sum_{n=1}^\infty q_n(\beta_n+q_n^{-1}\alpha_n)
(e_1+e_2+e_{n+2}),\ \ \ \text{where}\ \ \ \tau=x_2+\sum_{n=1}^\infty
q_n\beta_n.
$$
Using (\ref{norm}), we see that
$$
\|Su\|\leq \|x'\|_2+\|(x_1+\tau,x_2)\|_2+\sum_{n=1}^\infty
|\beta_n+q_n^{-1}\alpha_n|.
$$
From the definition of $\tau$ it follows that
$$
\|Su\|\leq \|x'\|_2+\|(x_1+x_2,x_2)\|_2+\sum_{n=1}^\infty
((1+q_n)|\beta_n|+q_n^{-1}|\alpha_n|).
$$
Taking into account that the norm of the operator with the matrix
$\begin{pmatrix}1&1\\0&1\end{pmatrix}$ acting on the 2-dimensional
Hilbert space $\K^2$ equals $\Bigl(\frac{3+\sqrt
5}{2}\Bigr)^{1/2}<\frac{5}{3}$, we have $\|(x_1+x_2,x_2)\|_2\leq
\frac53 \|(x_1,x_2)\|_2$. Substituting this into the last display
and taking into account that $1+q_n\leq 1+2^{-1/2}<\frac74$, we
obtain
$$
\|Su\|\leq \|x'\|_2+\frac53\|(x_1,x_2)\|_2+\frac74\|\beta\|_1+
\sum_{n=1}^\infty q_n^{-1}|\alpha_n|.
$$
Since the coefficients in the above display in front of $\|x'\|_2$,
$\|(x_1,x_2)\|_2$, $\|\beta\|_1$ and each $|\alpha_n|$ are all
strictly less than 2, formula (\ref{estim1}) implies that
$\|Su\|<2$.
\end{proof}

Now, observe that $S(e_2+e_{n+2})=e_1+e_2+e_{n+2}$. By
Lemma~\ref{nor}, $\|e_2+e_{n+2}\|=1$ and
$\|e_1+e_2+e_{n+2}\|=q_n^{-1}$. Hence $\|S\|\geq q_n^{-1}$ for any
$n\in\N$. Since $q_n^{-1}\to 2$ as $n\to\infty$, we have $\|S\|\geq
2$. Thus from Lemma~\ref{oper} it follows that $\|S\|=2$ and $S$
does not attain its norm. This completes the proof of
Proposition~\ref{ex}.

\section{Concluding remarks}

{\bf 1. }\ A more general approach to study the class $\cla$ is to
consider the following property.

{\bf Definition 3.} \ We say that a Banach space $X$ is {\it tame}
if for any $y\in X$, $x\in X\setminus\{0\}$ and any sequence
$\{u_n\}_{n\in\N}$ in $X$ weakly convergent to zero,
\begin{equation}\label{tame}
\slim_{n\to\infty}\frac{\|y+u_n\|}{\|x+u_n\|}\leq\max\biggl\{1,\frac{\|y\|}{\|x\|}\biggr\}.
\end{equation}

It is easy to see that $p$-spaces for $1<p<\infty$ are tame.

\begin{proposition}\label{tam1}
Let $X$ be a reflexive Banach space such that either $X$ or $X^*$ is
tame. Then $X\in\cla$.
\end{proposition}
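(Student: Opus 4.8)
The plan is to mimic the proof of Theorem~\ref{lp2}, replacing the $p$-space identity (Lemma~\ref{lp2222}) by the tameness inequality~(\ref{tame}), and to dispose of the hypothesis ``$X^*$ tame'' by a duality argument.

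First I would treat the case in which $X$ itself is tame. Given $T\in\kk(X)$ with $c=\|I+T\|>1$, write $S=I+T$ and choose a sequence $\{x_n\}$ with $\|x_n\|=1$ and $\|Sx_n\|\to c$. Since $X$ is reflexive, after passing to a subsequence I may assume that $\{x_n\}$ converges weakly to some $x\in X$. Because $\|S\|=c>1=\|I\|$, Lemma~\ref{xnz} applied with $J=I$ yields $x\neq 0$. Putting $u_n=x_n-x$, the sequence $\{u_n\}$ converges weakly to $0$; compactness of $T$ gives $\|Tx_n-Tx\|\to 0$, whence $\|x_n+Tx\|=\|Sx+u_n\|\to c$, while $\|x+u_n\|=\|x_n\|=1$ for every $n$.

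The decisive step is to apply the tameness inequality~(\ref{tame}) to the vectors $y=Sx$ and the nonzero vector $x$, together with the null sequence $\{u_n\}$. Since $\|Sx+u_n\|\to c$ and $\|x+u_n\|\to 1$, the ratio $\|Sx+u_n\|/\|x+u_n\|$ converges to $c$, so its upper limit equals $c$ and (\ref{tame}) gives $c\leq\max\{1,\|Sx\|/\|x\|\}$. As $c>1$, this forces $\|Sx\|\geq c\|x\|$, and combined with $\|Sx\|\leq\|S\|\,\|x\|=c\|x\|$ we obtain $\|Sx\|=c\|x\|$. Hence $S=I+T$ attains its norm at $x/\|x\|$, so $X\in\cla$.

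It remains to handle the case in which $X^*$, rather than $X$, is tame. Here I would observe that $X^*$ is itself reflexive, so the case already proved shows $X^*\in\cla$; Corollary~\ref{du3} then transfers this back to give $X\in\cla$. The only genuine subtlety in the whole argument is the non-triviality of the weak limit $x$, which is exactly the content of Lemma~\ref{xnz} with $J=I$; once $x\neq 0$ is secured, the conclusion follows by a routine combination of compactness, reflexivity, the tameness inequality, and duality.
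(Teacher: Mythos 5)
Your proof is correct and follows essentially the same route as the paper: reduce to the case where $X$ is tame via Corollary~\ref{du3}, extract a weakly convergent maximizing sequence by reflexivity, get $x\neq 0$ from Lemma~\ref{xnz} with $J=I$, use compactness of $T$ to replace $Tx_n$ by $Tx$, and then apply the tameness inequality to $y=(I+T)x$ and the weakly null sequence $u_n=x_n-x$ to force $\|(I+T)x\|\geq c\|x\|$. The only cosmetic difference is that the paper invokes the duality reduction at the outset rather than at the end.
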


\begin{proof} According to Corollary~\ref{du3}, it is sufficient to
consider the case, when $X$ is tame. Let $T\in\kk(X)$ and
$\|I+T\|=c>1$. Since $X$ is reflexive, we can pick a sequence
$\{x_n\}_{n\in\N}$ of elements of $X$ such that $\|x_n\|\to 1$,
$\|(I+T)x_n\|\to c$ and $\{x_n\}$ is weakly convergent to $x\in X$.
By Lemma~\ref{xnz}, $x\neq 0$. Since the sequence $u_n=x_n-x$ is
weakly convergent to $0$ and $T$ is compact, $Tx_n$ is
norm-convergent to $Tx$. Hence $\|x+Tx+u_n\|=\|x_n+Tx\|\to c$. Since
$X$ is tame, we have
$$
c=\lim_{n\to\infty}\frac{\|x+Tx+u_n\|}{\|x+u_n\|}\leq\max\biggl\{
1,\frac{\|x+Tx\|}{\|x\|}\biggr\}.
$$
The inequality $c>1$ and the last display imply that $\|x+Tx\|\geq
c\|x\|$. Taking into account that $c=\|I+T\|$, we see that $I+T$
attains its norm on $x/\|x\|$.
\end{proof}

Unfortunately, it seems there are no known examples of tame Banach
spaces which are not $p$-spaces. This naturally leads to the problem
of characterizing the tame spaces.

\bigskip

\noindent {\bf 2. }\ Analyzing the proof of Theorem~\ref{lp2}, one
can easily see that if $1<p<\infty$, $X$ is a $p$-space and $T\in
\kk(X)$ is such that $\|I+K\|>1$, then whenever $\{x_n\}_{n\in\N}$
is a sequence of elements of $X$ weakly converging to $x\in X$ and
satisfying $\|x_n\|\to 1$, $\|(I+T)x_n\|\to \|I+T\|$ as
$n\to\infty$, then $\{x_n\}$ is norm convergent to $x$ and $I+T$
attains its norm on $x$.

\bigskip

\noindent{\bf 3. }\ In a recent paper \cite{SHSH} Shargorodsky and
the author constructed a strictly convex reflexive Banach space $X$
and $S\in L(X)$ such that for some $\epsilon>0$, the level set
$\Sigma_\epsilon(S)\setminus\sigma_\epsilon(S)$ has non-empty
interior. The space $X$ constructed in \cite{SHSH} is an
$\ell_2$-direct sum of a countable family of finite dimensional
Banach spaces. Thus by Theorem~\ref{lp1}, (\ref{ps4}) holds for any
$T\in L(X)$. This observation shows that there is no relation
between validity of (\ref{ps4}) and meagreness of the level sets of
the norm of the resolvent.

\bigskip

\noindent{\bf 4. }\ Let $1<p<\infty$. As it follows from
Proposition~\ref{uvuv} and Theorem~\ref{lp2}, any subspace of an
$\ell_p$-direct sum $X$ of a family of finite dimensional Banach
spaces belongs to $\cla$. Applying Corollary~\ref{du3}, one can
easily see that same holds true for quotients of $X$ as well.
Indeed, $X^*$ is naturally isometrically isomorphic to an
$\ell_{p'}$-direct sum of a family of finite dimensional Banach
spaces, where $\frac1p+\frac1{p'}=1$. Moreover, for any closed
linear subspace $Y$ of $X$, $(X/Y)^*$ is naturally isometrically
isomorphic to a subspace of $X^*$.

\bigskip

\noindent {\bf 5. }\ It would be interesting to figure out which
classical Banach spaces do belong to the class $\cla$. A good
starting point would be to address the spaces $L_p[0,1]$ for
$1<p<\infty$. There is a strong indication against their membership
in $\cla$ for $p\neq 2$. Namely, it is easy to show that $L_p[0,1]$
for $p\neq 2$ is not tame.

\bigskip

\noindent {\bf 6. }\ The compactness condition in
Propositions~\ref{ll} and~\ref{ll0} can be replaced by the weaker
condition of $T$ being strictly singular. The proofs work without
any changes.

\small\rm

\vskip1truecm

\scshape

\noindent Stanislav Shkarin

\noindent Queen's University Belfast

\noindent Department of Pure Mathematics

\noindent University road, BT7 1NN \ Belfast, UK

\noindent E-mail address: \qquad {\tt s.shkarin@qub.ac.uk}

\end{document}